\newcommand{\dd}{{\rm d}}
\newcommand{\bd}{\begin{definition}}                
\newcommand{\ed}{\end{definition}}                  
\newcommand{\bc}{\begin{corollary}}                 
\newcommand{\ec}{\end{corollary}}                   
\newcommand{\bl}{\begin{lemma}}                     
\newcommand{\el}{\end{lemma}}                       
\newcommand{\bp}{\begin{proposition}}            
\newcommand{\ep}{\end{proposition}}                
\newcommand{\bere}{\begin{remark}}                  
\newcommand{\ere}{\end{remark}}                     
\newcommand{\bt}{\begin{theorem}}
\newcommand{\et}{\end{theorem}}
\newcommand{\be}{\begin{equation}}
\newcommand{\ee}{\end{equation}}
\newcommand{\bit}{\begin{itemize}}
\newcommand{\eit}{\end{itemize}}
\newtheorem{theorem}{Theorem}[section]
\newtheorem{corollary}[theorem]{Corollary}
\newtheorem{lemma}[theorem]{Lemma}
\newtheorem{proposition}[theorem]{Proposition}
\theoremstyle{definition}
\newtheorem{definition}[theorem]{Definition}
\theoremstyle{remark}
\newtheorem{remark}[theorem]{Remark}
\newtheorem{example}[theorem]{Example}
\def\hyph{-\penalty0\hskip0pt\relax}
\begin{document}

\begin{frontmatter}

\title{Convexity and quasi-uniformizability of closed preordered spaces}


\author{E. Minguzzi}

\address{ Dipartimento di Matematica e Informatica ``U.\ Dini'',
Universit\`a degli Studi di Firenze, Via S. Marta 3,  I-50139
Firenze, Italy.} \ead{ettore.minguzzi@unifi.it}



\begin{abstract}
\noindent In many applications it is important to establish if a
given topological preordered space has a topology and a preorder
which can be recovered from the set of continuous isotone functions.
Under antisymmetry this property, also known as
quasi-uniformizability, allows one to compactify the topological
space and to extend its order dynamics. In this work we study
locally compact $\sigma$-compact spaces endowed with a closed
preorder. They are known to be normally preordered, and it is proved
here that if they are locally convex, then they are convex, in the
sense that the upper and lower topologies generate the topology. As
a consequence, under local convexity they are quasi-uniformizable.
The problem of establishing local convexity under antisymmetry is
studied. It is proved that local convexity holds provided  the
convex hull of any compact set is compact. Furthermore, it is proved
that local convexity holds whenever the preorder is compactly
generated, a case which includes most examples of interest,
including preorders determined by cone structures over
differentiable manifolds. The work ends with some results on the
problem of quasi-pseudo-metrizability. As an application, it is
shown that every stably causal spacetime is quasi-uniformizable and
every globally hyperbolic spacetime is strictly
quasi-pseudo-metrizable.
\end{abstract}
%
%



\end{frontmatter}


\section{Introduction}
Topological preordered spaces are ubiquitous. They appear in the
study of dynamical systems \cite{bathia67}, general relativity
\cite{minguzzi09c}, microeconomics \cite{aumann62,bridges95},
thermodynamics \cite{lieb02} and computer science \cite{gierz03}. In
these applications it is important to establish if a topological
preordered space $(E,\mathscr{T}, \le)$ is quasi\hyph uniformizable,
namely, if  there is a quasi-uniformity $\mathcal{U}$ such that
$\mathscr{T}=\mathscr{T}(\mathcal{U}^{*})$ and $G(\le)=\bigcap
\mathcal{U}$. Taking into account a characterization of
quasi-uniformizability established by Nachbin \cite{nachbin65}, this
problem is equivalent to that of establishing if the topology and
the preorder of the space are determined by the family of continuous
isotone functions.

Hausdorff quasi\hyph uniformizable spaces are compactifiable
\cite{nachbin65,fletcher82} and, in general, the possibility of
restricting an analysis to the compact case brings several
simplifications. In other circumstances, the boundary (remainder)
involved in the compactification has special importance. For
instance, a good definition of spacetime boundary in general
relativity would allow us  to identify the singular spacetime points
\cite{geroch72}.


Quasi-uniformizable spaces are $T_2$-preordered spaces, thus $\le$
must be closed in order to have any chance to come from a
quasi-uniformity. In the various fields that in one way or the other
are connected with topological preordered spaces, it has been
discovered that it is indeed very convenient to study some new
closed preorder related to the original preorder. This is the
strategy of `prolongations' introduced by Auslander in dynamical
systems \cite{auslander64}, and rediscovered in a different setting
in relativity theory, where  Seifert \cite{seifert71,minguzzi07}
introduced a closed relation related to the causal relation, and
Sorkin and Woolgar  \cite{sorkin96} introduced the smallest closed
relation containing the causal relation (see Sect. \ref{nod}).


Given a $T_2$-preordered space $(E,\mathscr{T},\le)$ it is possible
to infer preorder normality provided $(E,\mathscr{T})$ is a
$k_\omega$-space \cite{minguzzi11f}. We recall that a topological
space is a $k_\omega$-space if there is a (admissible)  sequence of
compact sets $K_i$, $\bigcup_i K_i=E$, such that $O\subset E$ is
open if and only if $O\cap K_i$ is open in $K_i$. It is not
restrictive to assume $K_i\subset K_{i+1}$, and $K_1$ equal to any
chosen compact set. In this work we shall use the fact that locally
compact $\sigma$-compact spaces are $k_\omega$-spaces. Indeed, under
local compactness the properties: hemicompact $k$-space,
$k_\omega$-space, $\sigma$-compact,  and Lindel\"of  are equivalent
\cite{minguzzi11c}. Since we do not assume that $E$ is Hausdorff, we
remark that in our terminology a topological space is {\em locally
compact} if each point has a compact neighborhood. It is {\em
strongly locally compact} if at each point the neighborhood system
of the point has a base made of compact neighborhoods (not
necessarily closed).

Convex normally preordered spaces are quasi\hyph uniformizable
\cite[Prop.\ 4.7]{fletcher82} (i.e.\ they are completely regularly
preordered spaces \cite{nachbin65}), and quasi\hyph uniformizable
spaces  are convex closed preordered spaces. Unfortunately, although
a $T_2$-preordered $k_\omega$-space is normally preordered, preorder
normality does not imply quasi\hyph uniformizability as convexity is
missing.  Indeed, we shall give an example of a $T_2$-preordered
locally compact $\sigma$-compact space which is not convex (see
example \ref{bao}).


This work is devoted to the proof of the convexity and hence
quasi\hyph uniformizability of a large class of locally compact
$\sigma$-compact closed preordered spaces.

The main result of this work is the proof that  for these preordered
spaces local convexity and convexity are equivalent (Cor.\
\ref{bak}).
We then proceed to study local convexity, showing that it follows
from antisymmetry plus some other assumptions. We prove that local
convexity holds for {\em $k$-preserving spaces} (Theor.\ \ref{bdd}),
namely for those spaces for which  the convex hull of any compact
set is compact. The definition of $k$-preserving space is quite
important for the connection with global hyperbolicity in relativity
theory \cite{hawking73} (see Sect.\ \ref{nod}).

Furthermore, we show that if the order is compactly generated then
local convexity holds  (Cor.\ \ref{nja}). Joining this result with
the previous one  we infer that if, roughly speaking, both the
topology and order are generated `locally' then convexity holds
(Cor.\ \ref{bdr}). This case includes most examples of topological
preordered spaces of interest, including those in which the preorder
is induced by a distribution of tangent cones on a differentiable
manifold \cite{fathi12}. We shall compare our findings with similar
results obtained by Akin and Auslander in the study of dynamical
systems \cite{akin10b}.

Finally,  under second countability we obtain a result on the
quasi-pseudo-metrizability of the space which generalizes Urysohn's
theorem (Theor. \ref{ncu}), and under the $I$-space condition we are
able to assure the strict quasi-pseudo-metrizability of the space
(Theor. \ref{nci}). As an application, we prove that globally
hyperbolic spacetimes (see Sect.\ \ref{nod} for the definition) are
strictly quasi-pseudo-metrizable.

%

%
%

\subsection{Some reference results on mathematical relativity and causality
theory} \label{nod}
At places we shall illustrate our findings using
the topological ordered space given by the spacetime manifold
ordered with a  causal relation. Therefore,  it is worth
recalling some definitions and result from this field. The reader
can skip this section on first reading, returning to it whenever
this application is mentioned.

Let $M$ be a Hausdorff, connected, paracompact ($C^{r+1}$, $r\ge 0$)
manifold and let $g: M\to T^{*}M\otimes T^{*}M$ be a  ($C^r$, $r\ge
0$) Lorentzian metric, namely a pseudo-Riemannian metric of
signature $(-,+,\cdots,+)$. Non vanishing tangent vectors split into
spacelike, lightlike or timelike depending on the sign of $g(v,v)$,
$v\in TM$, respectively positive, null or negative. Lightlike or
timelike vectors are called causal. Assume that a continuous
timelike vector field can be defined over $M$, and call {\em
future} the cone of causal vectors including it. If this is not
possible there is always a double covering of $M$ with this
property, thus this is not a severe restriction. Once such a choice
of future cone has been made, the Lorentzian manifold is {\em time
oriented}. A {\em spacetime} $(M,g)$ is a time oriented Lorentzian
manifold. The simplest example of spacetime is the 2-dimensional
Minkowski spacetime, namely $\mathbb{R}^2$ with coordinates $(t,x)$,
metric $g=-\dd t^2+\dd x^2$ and time orientation given by the global
timelike vector $\partial_t$.

Let us observe that once a time orientation is given, any causal vector is
either {\em future directed} or {\em past directed} depending on
whether it belongs to the future cone. This terminology extends to
$C^1$  curves depending on the character of their tangent vector,
provided it is consistent throughout the curve.

The {\em causal relation} $J^+\subset M\times M$ over $M$ is defined
through: $(x,y)\in J^+$ if there is a future directed causal curve
from $x$ to $y$ or $x=y$. The {\em chronology relation} $I^+\subset
M\times M$ over $M$ is defined through: $(x,y)\in I^+$ if there is a
future directed timelike curve from $x$ to $y$. We have $J^+\circ
I^+=I^+\circ J^+=I^+$, and  $I^+$ is open in the product topology
\cite{minguzzi06c,hawking73}.
 Unfortunately, the causal relation is not
necessarily closed, as can be easily realized considering the
spacetime which is obtained removing a point  from the 2-dimensional
Minkowski spacetime.

The relation $K^+$ is by definition \cite{sorkin96} the smallest
closed and transitive relation containing $J^+$ and it exists
because $R:=M\times M$ provides an example of closed and transitive
relation containing $J^+$. Unfortunately, it is difficult to work
with $K^+$ since it is defined through its closure and transitivity
properties rather than through the more intuitive notion of causal
curve. Seifert \cite{seifert71} found another route to build a
closed and transitive relation. Let us write $g'>g$ if the timelike
cones of $g'$ contain the causal cones of $g$, and let $J^+_{g'}$ be
the causal relation for $(M,g')$. Seifert proved that
$J^+_S:=\bigcap_{g'>g} J^+_{g'}$ is indeed closed, transitive and contains
$J^+$.

A spacetime $(M,g)$ is said to be  {\em causal} if it does not
contain any closed causal curve. It is {\em stably causal} if there
is $g'>g$ such that $(M,g')$ is causal, namely if it is possible to
open the light cones everywhere over $M$ without introducing closed
causal curves. A relation $R$ is antisymmetric if $(x,y)\in R$ and
$(y,x)\in R$ implies $x=y$. It can be proved that the spacetime is
causal (resp.\ stably causal) iff $J^+$ (resp.\ $J^+_S$) is
antisymmetric \cite{minguzzi07}. It also turns out
\cite{minguzzi08b} that stable causality holds iff $K^+$ is
antisymmetric, and in this case $K^+=J^+_S$. Thus $J^+_S$ is really
the most natural closed and transitive relation that can be
introduced in a stably causal spacetime.

Let us write $J^+(x):=\{y: (x,y)\in J^+\}$ and $J^{-}(y):=\{x:
(x,y)\in  J^+ \}$, and if $X\subset M$, let $J^{\pm}(X):=
\bigcup_{x\in X} J^{\pm}(x)$. A spacetime is {\em causally
continuous} if the relation \[D^+=\{(x,y): y \in \overline{J^+(x)} \
\textrm{and} \ x \in \overline{J^-(y)} \},\] is antisymmetric (a
property known as {\em weak distinction}) and coincides with
$\overline{J^+}$ (a property known as {\em reflectivity}). It is not
hard to prove \cite{minguzzi07e} that $D^+$ is  transitive, thus
under causal continuity $\overline{J^+}$ is closed, transitive and
contains $J^+$. As a consequence, it is  the smallest relation with
such properties,  $K^+=\overline{J^+}$, and hence causal continuity
implies stable causality.

A spacetime is {\em causally simple} if it is causal and $J^+$ is
closed. Clearly, under causal simplicity $D^+=J^+$, thus causal
simplicity implies causal continuity (note that under causal
simplicity we have also $J^+=K^+=J^+_S$).

 Another important causality property is {\em
global hyperbolicity}. A spacetime $(M,g)$ is globally hyperbolic if
it is causal and for every compact set $K$, its convex causal hull
$J^+(K)\cap J^{-}(K)$ is compact. It can be shown that every
globally hyperbolic spacetime is causally simple \cite{minguzzi06c}.
These spacetimes are the most studied in mathematical relativity
because a spacetime is globally hyperbolic iff it admits a {\em
Cauchy hypersurface}, namely a topological hypersurface intersected
by any inextendible (i.e.\ with no endpoint) causal curve in exactly
one point \cite{hawking73}. Therefore, they are the spacetimes for
which the Cauchy problem of general relativity and that of wave
equations makes sense.


%

\subsection{Preliminaries on topological preordered spaces}

A topological preordered space  is a triple $(E,\mathscr{T},\le)$
where $(E,\mathscr{T})$ is a topological space and $\le$ is a {\em
preorder} on $E$, namely a reflexive and transitive relation. A
preorder is an {\em order} if it is antisymmetric (that is, $x\le y$
and $y\le x$ $\Rightarrow x=y$). For a topological preordered space
$(E,\mathscr{T},\le)$ our terminology follows Nachbin
\cite{nachbin65}.   With $i(x)=\{y: x\le y\}$ and $d(x)=\{y: y\le
x\}$ we denote the increasing and decreasing hulls, and we define
$[x]=d(x)\cap i(x)$.  The topological preordered space is {\em
$T_1$-preordered} (or {\em semiclosed preordered})  if $i(x)$ and
$d(x)$ are closed for every $x \in E$, and it is {\em
$T_2$-preordered} (or {\em closed preordered}) if the graph of the
preorder $G(\le)=\{(x,y): x\le y\}$ is closed.

Let $S\subset E$, we define $i(S)=\bigcup_{x\in S} \,i(x)$ and
analogously for $d(S)$. A subset $S\subset E$, is called {\em
increasing} if $i(S)=S$ and {\em decreasing} if $d(S)=S$.  It is
called {\em monotone} if it is increasing or decreasing. With $I(S)$
we denote the smallest closed increasing set containing $S$, and
with $D(S)$ we denote the smallest closed decreasing set containing
$S$. A subset $C$ is {\em convex} if it is the intersection of a
decreasing and an increasing set in which case $C=d(C)\cap i(C)$. A
subset $C$ is a $c${\em-set} \cite{kent85} if it is the intersection
of a closed decreasing and a closed increasing set in which case
$C=D(C)\cap I(C)$. The neighborhood of a point which is a $c$-set is
a $c$-neighborhood, and a $c$-set which is compact is a $c$-compact
set.  In the notation of this work the set inclusion $\subset$, is
reflexive, i.e.\ $X\subset X$.

A topological preordered space  is a {\em normally preordered space}
if it is $T_1$-preordered and for every closed decreasing set $A$
and closed increasing set $B$ which are disjoint, $A\cap
B=\emptyset$, it is possible to find an open decreasing set $U$ and
an open increasing set $V$ which separate them, namely $A\subset U$,
$B\subset V$, and $U\cap V=\emptyset$.

Given a reflexive relation $R$ on $E$,  a function $f: E\to
\mathbb{R}$ such that $(x,y) \in R \Rightarrow f(x)\le f(y)$ is an
{\em isotone} function. An  isotone function such that $(x,y) \in R
\textrm{ and } (y, x) \notin R \Rightarrow f(x)< f(y)$ is a {\em
utility} function.

In a normally preordered space, closed disjoint monotone sets as $A$
and $B$ above can be separated by a continuous isotone function
$f:E\to [0,1]$, that is $A\subset f^{-1}(0)$, $B\subset f^{-1}(1)$
(this is the preorder analog of Urysohn's separation lemma, see
\cite[Theor.\ 1]{nachbin65}). Normally preordered spaces are
$T_2$-preordered spaces, and $T_2$-preordered spaces are
$T_1$-preordered spaces.

A topological preordered space $E$ is {\em convex} at $x\in E$, if
for every open  neighborhood $O\ni x$, there are an open decreasing
set $U$ and an open increasing  set $V$ such that $x\in U\cap
V\subset O$ (this definition is  due to Nachbin \cite{nachbin48} and
is used in \cite{burgess77,kent85,kunzi04}, though the terminology
is not uniform in the literature). It is {\em locally convex} at
$x\in E$ if the set of convex neighborhoods of $x$ is a base for the
neighborhoods system of this point \cite{nachbin48,nachbin65}. It is
{\em weakly convex} at $x\in E$ if the set of convex open
neighborhoods of $x$ is a base for the neighborhoods system of this
point \cite{nachbin48,mccartan71}. The topological preordered space
$E$ is {\em convex} (locally convex, weakly convex) if it is convex
(resp.\ locally convex, weakly convex) at every point. Clearly,
convexity (at a point) implies weak convexity (at a point) which in
turn implies local convexity (at a point). Notice that according to
this terminology the statement ``the topological preordered space
$E$ is convex'' differs from the statement ``the subset $E$ is
convex'' (which is always true).

%

 A quasi-uniformity \cite{nachbin65,fletcher82}
is a pair $(X,\mathcal{U})$ such that $\mathcal{U}$ is a filter on
$X\times X$, whose elements contain the diagonal $\Delta$, and such
that if $V\in \mathcal{U}$ then there is $W\in \mathcal{U}$, such
that $W\circ W\subset V$. A quasi-uniformity is a uniformity if
$V\in \mathcal{U}$ implies $V^{-1} \in \mathcal{U}$, where
$V^{-1}=\{(x,y): (y,x)\in V\}$. To any quasi-uniformity
$\mathcal{U}$ corresponds a dual quasi-uniformity
$\mathcal{U}^{-1}=\{U: U^{-1}\in \mathcal{U}\}$.

From a quasi-uniformity $\mathcal{U}$ it is possible to construct a
topology $\mathscr{T}(\mathcal{U})$ in such a way that a base for
the filter of neighborhoods at $x$ is given by the sets of the form
$U(x)$ where $U(x)=\{y: (x,y)\in U\}$ with $U \in \mathcal{U}$. In
other words, $O\in \mathscr{T}(\mathcal{U})$ if for every $x\in O$
there is $U\in \mathcal{U}$ such that $ U(x) \subset O$.

Given a quasi-uniformity $\mathcal{U}$, the family $\mathcal{U}^*$
given by the sets of the form $V\cap W^{-1}$, $V,W\in\mathcal{U}$,
is the coarsest uniformity containing $\mathcal{U}$. The symmetric
topology of the quasi-uniformity is $\mathscr{T}(\mathcal{U}^{*})$.
Moreover, the intersection $\bigcap \mathcal{U}$ is the graph of a
preorder on $X$ (see \cite{nachbin65}), thus given a
quasi-uniformity one naturally obtains a topological preordered
space $(X,\mathscr{T}(\mathcal{U}^{*}),\bigcap \mathcal{U})$. The
topology $\mathscr{T}(\mathcal{U}^{*})$ is Hausdorff if and only if
the preorder $\bigcap \mathcal{U}$ is an order \cite{nachbin65}.

Nachbin proves \cite[Prop.\ 8]{nachbin65} that a topological
preordered space $(E,\mathscr{T},\le)$ comes from a quasi-uniformity
$\mathcal{U}$, in the sense that
$\mathscr{T}=\mathscr{T}(\mathcal{U}^{*})$ and $G(\le)=\bigcap
\mathcal{U}$, if and only if $E$ is a {\em completely regularly
preordered} space ($T_{3\nicefrac{1}{2}}$-preordered space,
Tychonoff-preordered space), namely if and only if the following two
conditions hold:
\begin{itemize}
\item[(i)] $\mathscr{T}$ coincides with the initial topology generated by the set of continuous
isotone functions $g:E\to [0,1]$,
\item[(ii)] $x\le
y$ if and only if for every continuous isotone function $f:E\to
[0,1]$, $f(x)\le f(y)$.
\end{itemize}
Completely regularly preordered spaces are convex $T_2$-preordered
spaces (convexity follows from (i) see \cite[Prop.\ 6,
Cap.II]{nachbin65}, and the closure of the preorder follows from
(ii)). Contrary to what happens in the usual discrete-preorder case,
normally preordered spaces need not be completely regularly
preordered spaces (see example \ref{bao}), nevertheless the preorder
analog of  Urysohn's separation lemma implies that convex normally
preordered spaces are completely regularly preordered spaces.
Completely regularly ordered spaces admit the Nachbin's
$T_2$-ordered compactification $nE$ (see \cite{fletcher82} and \cite{minguzzi11b} for the preorder case).

\subsection{Preliminary results on convexity}


A theorem by Nachbin states that every compact $T_2$-ordered space
is convex \cite[p.\ 48]{nachbin65}. Unfortunately, this theorem
assumes the compactness of the space from the start, and hence it is
not really useful in applications. There one would like to pass
through convexity exactly to prove quasi\hyph uniformizability, so
as to introduce and work in the compactified space.

The most common strategy is then that of adding some additional
conditions to the preorder such as the $C$-space and $I$-space
conditions \cite{priestley72} (compare with the definitions of
continuous and anti-continuous preorder in
\cite{mccartan71,burgess77}). A topological preordered space $E$ is
a $C$-space ($I$-space) if for every closed (open) subset $S$,
$d(S)$ and $i(S)$ are closed (resp.\ open).

The following theorem and proof are due to H.-P. K{\"u}nzi
\cite[Lemma 2]{kunzi92}. They are included for the reader
convenience.

\begin{theorem}
Every normal $T_1$-ordered $C$-space $(E,\mathscr{T},\le)$ is
convex.
\end{theorem}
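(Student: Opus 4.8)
The plan is to show convexity at an arbitrary point $x$, i.e.\ given any open neighborhood $O \ni x$, produce an open decreasing set $U$ and an open increasing set $V$ with $x \in U \cap V \subset O$. Since we only have normality (a separation property for disjoint closed monotone sets) at our disposal, the natural strategy is to manufacture two disjoint closed monotone sets out of the data $x$ and $O$, then feed them to the normality hypothesis to extract the separating open monotone sets. First I would consider the closed set $E \setminus O$; the obstacle is that $E \setminus O$ is not monotone, so I cannot separate it from $\{x\}$ directly. This is exactly where the $C$-space condition enters: it converts closed sets into closed monotone sets via the hull operators.

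\smallskip

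Concretely, I would split the complement into an ``upper'' and a ``lower'' part relative to $x$. The idea is that a point $z \notin O$ either fails to be comparable with $x$ in a way that puts it above $x$, or below $x$, and one wants to push $z$ away from $x$ using a decreasing set on one side and an increasing set on the other. Using the $C$-space property, $d(E\setminus O)$ and $i(E\setminus O)$ are closed, but these are too large. The cleaner route is: consider the closed increasing set $i(x)$ and the closed decreasing set $d(x)$ (closed because the space is $T_1$-preordered). Since the space is $T_1$-ordered and $\le$ is antisymmetric, $i(x) \cap d(x) = \{x\}$. I would then work inside $O$ to separate $x$ from the part of $\partial O$ lying ``above'' and ``below'' it. Specifically, set $A_1 = d(x) \setminus O$ and $B_1 = i(x)\setminus O$; more usefully, apply the $C$-space hull operators to closed pieces of $E\setminus O$ to get genuinely closed monotone sets that miss $x$ by antisymmetry, and then invoke normality.

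\smallskip

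The cleanest formulation I would aim for: let $F = E \setminus O$, a closed set not containing $x$. By the $C$-space condition, $D(F) := d(F)$-closure is handled automatically since $d(F)$ is already closed, and similarly $i(F)$ is closed. The key claim is that $x \notin d(F) \cap i(F)$ need not hold, so one separates in two stages. First separate the closed decreasing set $d(x)$ restricted appropriately from the closed increasing set $i(F) \cap (\text{something})$; the antisymmetry of the order guarantees the relevant intersections with $x$ are empty or reduce to $\{x\}$, which lets normality apply. Applying preorder-normality then yields an open decreasing $U_0$ and open increasing $V_0$ separating the constructed closed monotone sets, and intersecting their appropriate combinations gives the desired convex neighborhood $U \cap V \subset O$ with $x \in U \cap V$.

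\smallskip

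The main obstacle I anticipate is the bookkeeping of \emph{which} closed monotone sets to separate: one must arrange two disjoint pairs (a closed decreasing set and a closed increasing set) so that normality produces an open decreasing $U$ and an open increasing $V$ whose intersection is trapped inside $O$ and still contains $x$. The antisymmetry (order, not just preorder) is essential here to guarantee that the hulls $i(x)$ and $d(x)$ meet only at $x$, which is what prevents the separating sets from swallowing $x$. I expect the delicate point to be verifying $U \cap V \subset O$ rather than merely $U \cap V \cap (\text{closed set}) = \emptyset$; this requires choosing the closed sets fed into normality to be exactly the two monotone ``shadows'' of $F$ obtained from the $C$-space operators, so that their separating complements reconstruct $O$ on the nose.
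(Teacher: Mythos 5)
Your starting data are the same as the paper's---the closed sets $d(x)\setminus O$ and $i(x)\setminus O$, disjoint by antisymmetry since $d(x)\cap i(x)=\{x\}\subset O$---and you correctly flag the delicate point, namely verifying $U\cap V\subset O$. But the proposal stops exactly there: no $U$ and $V$ are ever constructed, and the only concrete prescription you give (feed ``the two monotone shadows of $F$'', $F=E\setminus O$, to normality so that ``their separating complements reconstruct $O$'') is one you had already, correctly, ruled out: $d(F)$ and $i(F)$ may contain $x$ and may meet each other, so no separation axiom applies to them. The deeper issue your sketch never touches is the set of points of $E\setminus O$ that are order-incomparable with $x$. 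Any scheme that only separates (hulls of) $d(x)\setminus O$ from (hulls of) $i(x)\setminus O$ by monotone open sets controls the comparable part of the complement but leaves incomparable points free to enter $U\cap V$; this is precisely why ``$U\cap V\cap(\text{closed set})=\emptyset$'' does not upgrade to ``$U\cap V\subset O$'', and no amount of bookkeeping with those pairs alone will fix it.

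The missing idea, which is the actual content of the paper's proof (due to K{\"u}nzi), is that normality is used only to produce disjoint plain open buffers $H_1\supset d(x)\setminus O$ and $H_2\supset i(x)\setminus O$; note that ``normal'' here is topological normality of $(E,\mathscr{T})$---the separated sets and the separating sets are not monotone---not the monotone/preorder normality you assumed. Monotonicity then comes not from normality but from the $C$-space condition via a complement-of-hull-of-complement construction: $E\setminus(H_1\cup O)$ is closed and disjoint from the decreasing set $d(x)$, so $U:=E\setminus i\bigl(E\setminus(H_1\cup O)\bigr)$ is open (by the $C$-space property), decreasing, and satisfies $d(x)\subset U\subset H_1\cup O$; dually $V:=E\setminus d\bigl(E\setminus(H_2\cup O)\bigr)$ is open increasing with $i(x)\subset V\subset H_2\cup O$. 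Since $H_1\cap H_2=\emptyset$ we get $(H_1\cup O)\cap(H_2\cup O)=O$, hence $U\cap V\subset O$---and it is this trapping of $U$ in $H_1\cup O$ and of $V$ in $H_2\cup O$ that automatically excludes the incomparable points. Incidentally, your preorder-normality reading could be salvaged: $d(d(x)\setminus O)$ and $i(i(x)\setminus O)$ are closed monotone (by the $C$-space property) and disjoint (by antisymmetry and transitivity), so monotone normality yields monotone buffers $H_1,H_2$, after which the same complement-of-hull step finishes the proof; but that step is exactly what is absent from your sketch.
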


\begin{proof}
Let $O$ be an open neighborhood of $x\in E$. The closed sets
$d(x)\setminus O$ and $i(x)\setminus O$ are disjoint. By normality
these sets can be separated by open sets, say $H_1$ and $H_2$, then
$d(x)\subset H_1\cup O$ and $i(x)\subset  H_2\cup O$. The set
$E\setminus (H_1\cup O)$ is closed and is disjoint from $d(x)$. By
the $C$-space assumption $i(E\setminus (H_1\cup O))$ is closed and
is disjoint from $d(x)$ thus $U=E\backslash i(E\setminus (H_1\cup
O))$ is an open decreasing set such that $d(x)\subset U\subset
H_1\cup O$. Analogously there is an open increasing set $V$ such
that $i(x)\subset V\subset H_2\cup O$. Thus $x\in (U\cap V)\subset
(H_1\cup O)\cap (H_2\cup O)\subset O$. Hence the space is convex.
\end{proof}
Unfortunately the $C$-space condition is too strong as not even
$\mathbb{R}^2$ with the product order is a $C$-space (consider the
increasing hull of the closed set $S=\{(x,y): x<0, y>0, y=-1/x\}$).

Concerning the $I$-space property we have the following
simplification.

\begin{theorem} \label{bfr}
Every  locally convex $I$-space $(E,\mathscr{T},\le)$ is convex.
\end{theorem}

\begin{proof}
Let $x\in E$ and let $O$ be an open neighborhood of $x$. By local
convexity there are a convex set $C$ and an open set $O'$ such that
$x\in O'\subset C\subset O$. Since $E$ is an $I$-space the sets
$V=i(O')$ and $U=d(O')$ are respectively open increasing and open
decreasing. Furthermore, $x\in U\cap V\subset d(C)\cap i(C)=C\subset
O$, which proves that $E$ is convex.
\end{proof}

In this connection, the next interesting result due to Burgess and
Fitzpatrick \cite[Cor.\ 4.4]{burgess77} is worth mentioning

\begin{theorem}
Every locally compact convex $T_2$-ordered $I$-space is completely
regularly ordered.
\end{theorem}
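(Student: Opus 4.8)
The plan is to reduce the statement to the implication, already recorded above, that a convex normally preordered space is completely regularly preordered \cite{nachbin65,fletcher82}. Since convexity is assumed and antisymmetry is part of the $T_2$-ordered hypothesis, it suffices to prove that the space is \emph{normally ordered}. The $T_1$-ordered half of that definition is immediate: closedness of $G(\le)$ makes the topology Hausdorff (the diagonal equals $G(\le)\cap G(\ge)$ by antisymmetry, hence is closed), and $i(x)$ and $d(x)$ are closed because they are sections of the closed set $G(\le)$. The whole content therefore lies in separating a disjoint closed decreasing set $A$ and closed increasing set $B$ by an open decreasing set $U\supset A$ and an open increasing set $V\supset B$ with $U\cap V=\emptyset$. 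By the isotone Urysohn lemma this is equivalent to producing a continuous isotone $f\colon E\to[0,1]$ with $f|_A=0$ and $f|_B=1$, for then $f^{-1}([0,\tfrac{1}{2}))$ and $f^{-1}((\tfrac{1}{2},1])$ are the sought open decreasing and open increasing sets.

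To build this monotone separation I would work locally and then globalize through the order. Local compactness supplies, near each relevant point, a compact neighborhood, and convexity lets me shrink it to one of the form $U'\cap V'$ with $U'$ open decreasing and $V'$ open increasing, so that the point is sandwiched between an open decreasing and an open increasing set inside any prescribed open set. The $I$-space hypothesis is the device that keeps monotonicity under the operations involved: since $i(S)$ and $d(S)$ are open whenever $S$ is open, any separation obtained on a compact piece can be enlarged to its increasing, respectively decreasing, hull while staying open and monotone; dually, for any closed set $C$ the sets $E\setminus i(E\setminus C)$ and $E\setminus d(E\setminus C)$ are closed and respectively decreasing and increasing, providing closed monotone sets on which to iterate. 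The function $f$ would then be assembled by a Urysohn-type dyadic scheme, at each stage inserting an open decreasing set and an open increasing set between the previously constructed ones, using convexity to sandwich and the $I$-space property to ensure that the inserted sets are monotone.

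The main obstacle is that local compactness does not by itself yield topological normality: a locally compact Hausdorff space may fail to be normal, so one cannot begin from a topological separation of $A$ and $B$ by disjoint open sets and simply render it monotone. The separation must be generated from local compactness and the order acting together. The sharpest point within this is the preservation of \emph{disjointness} under the hull operations: replacing a separating pair $U_0\supset A$, $V_0\supset B$ by the monotone open sets $d(U_0)$ and $i(V_0)$ keeps them open and monotone but can destroy $d(U_0)\cap i(V_0)=\emptyset$. The resolution I would pursue is to interleave the hull operations with the compact exhaustion coming from local compactness, maintaining disjointness of the monotone open sets at every finite stage and controlling the limit, so that antisymmetry together with the closedness of $G(\le)$ forbids the monotone parts from meeting. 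Once the isotone separating functions are available the space is normally ordered, and the cited implication shows that $E$ is completely regularly ordered.
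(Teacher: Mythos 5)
Your proposed reduction fails at its very first step: you plan to prove that the space is \emph{normally ordered} and then invoke the implication ``convex $+$ normally preordered $\Rightarrow$ completely regularly preordered''. But normality does not follow from the hypotheses of this theorem --- indeed it is false under them. Consider the discrete order (equality) on an arbitrary locally compact Hausdorff space: the graph of the order is the diagonal, which is closed, so the space is $T_2$-ordered; every subset coincides with its own increasing and decreasing hulls, so the space is trivially a convex $I$-space. For this special case your intermediate claim would assert that every locally compact Hausdorff space is normal, which is false (the deleted Tychonoff plank is the standard counterexample). This is precisely the same phenomenon as in the order-free classical theorem ``locally compact Hausdorff $\Rightarrow$ completely regular'': complete regularity is a pointwise, locally verifiable property, while normality is global and genuinely stronger. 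Your own text concedes the obstacle (``local compactness does not by itself yield topological normality'') but proposes to overcome it by interleaving hull operations with ``the compact exhaustion coming from local compactness''; no such exhaustion exists, because the theorem does not assume $\sigma$-compactness, and no limiting scheme can succeed when the statement it aims at is false.

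For the record, the paper itself gives no proof of this statement: it is quoted from Burgess and Fitzpatrick \cite[Cor.\ 4.4]{burgess77}. A workable argument must remain local-to-global at the level of \emph{functions}, never passing through global preorder normality: by local compactness and convexity each point has compact convex neighborhoods; a compact $T_2$-ordered space is normally ordered \cite{minguzzi11f} and convex \cite{nachbin65}, hence carries continuous isotone functions separating the point from the complement of any neighborhood; the $I$-space condition (openness of $i(S)$ and $d(S)$ for $S$ open) together with convexity is then exactly the tool that transports these locally constructed monotone open sets and isotone functions to all of $E$, so that Nachbin's two conditions (i) and (ii) characterizing completely regularly ordered spaces can be verified point by point. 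Your local-to-global instinct is sound, but the object to be globalized must be the separating isotone function at a point, not a monotone normal separation of arbitrary disjoint closed monotone sets.
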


\begin{remark} \label{mix}
The $I$-space property is sometimes justified in applications. For
instance, in general relativity (see Sect.\ \ref{nod}) the closure
$\overline{J^+}$ of the causal relation in a causally continuous
spacetime provides a preorder which turns spacetime into a
topological closed preordered $I$-space.\footnote{Proof: we have
mentioned in Sect.\ \ref{nod} that under causal  continuity
$D^+=\overline{J^+}=K^+$ is antisymmetric, thus the spacetime is
stably causal and $(M,\mathscr{T},\overline{J^+})$ is a closed
ordered space. Let $O\subset M$ be an open set, and let $(x,y)\in
D^+$ where $x\in O$, so that $x\in \overline{J^{-}(y)}$. Pick $x'\in
I^{-}(x)\cap O$, then $x\in I^+(x')$ and hence $y\in I^+(x')$. Since
$I^+$ is open and $I^+\subset \overline{J^+}$, we conclude that a
neighborhood of $y$ is contained in $D^+(O)$ and hence that $D^+(O)$
is open.}

In this work we shall try to avoid as much as possible the
simplifying $C$-space and $I$-space assumptions, and we shall
instead impose weak conditions on the preorder and the topology in
order to attain convexity. We shall meet again the I-space
assumption at the end of this work, where it is used in connection
with strict quasi-pseudo-metrizability.
\end{remark}

We end the section with examples which show that a normally
preordered space need not be convex. An example can be found in
\cite[Example 4.9]{fletcher82}.

A locally compact $\sigma$-compact $T_2$-ordered space which is not
locally convex can be found in \cite[p.\ 59]{akin10b}. The next
example is particularly interesting because the topology  has nice
properties.

\begin{example} \label{bao}
Let $E=(0,1]\subset \mathbb{R}$ with the induced topology which we
denote $\mathscr{T}$. The topology is  particulary well behaved, it
is connected, metrizable, locally compact, $\sigma$-compact, second
countable.  Define on $E$ the order $\preceq$ through the following
increasing hulls
\begin{align*}
i(x)&=\{y\in (0,1]: x\le y\le 1-x\} &\textrm{ if } 0<x\le 1/2, \\
i(x)&=\{x\} &\textrm{ if } 1/2<x<1, \\
i(x)&=(0,1]  &\textrm{ if } x=1.
\end{align*}
With this definition the decreasing hulls are
\begin{align*}
d(x)&=\{y\in (0,1]: 0<y\le x \textrm{ or } y=1\} &\textrm{ if } 0<x\le 1/2, \\
d(x)&=\{y \in (0,1]: 0<y\le 1-x \textrm{ or } y=x \textrm{ or }
y=1\} &\textrm{ if } 1/2<x\le 1.
\end{align*}
It is easy to check that  $\preceq$ is reflexive, transitive and
antisymmetric and hence an order.  $E$ with this order is a
$T_2$-ordered space, indeed let $x_n\preceq y_n$ with $(x_n,y_n)\to
(x,y)$. If $x=1$ then necessarily as $i(x)=E$, $y\in i(x)$. If
$1/2<x<1$ then for sufficiently large $n$, $1/2<x_n<1$ thus
$y_n=x_n$ and then $y=\lim y_n=\lim x_n=x$ that is $y\in i(x)$. If
$0<x\le 1/2$ then we can assume, up to a subsequence, that either
for all $n$, $1/2<x_n<1$ (and hence $x=1/2$), or $0<x_n\le 1/2$. In
the former case $y_n=x_n$ and then $y=x=1/2$  thus $y\in i(x)$,
while in the latter case passing to the limit the equation $x_n\le
y_n\le 1-x_n$ we get $x\le y\le 1-x$ that is $y \in i(x)$ which
concludes the proof. Let us observe that $\mathscr{T}$ is second
countable and locally compact which implies that
$(E,\mathscr{T},\preceq)$ is a normally ordered space
\cite{minguzzi11f}. Nevertheless, convexity does not hold at $x=1$
and in fact even local convexity fails there because every convex
neighborhood of $1$ contains points `arbitrarily close to the lower
edge at 0'.
\end{example}


\section{From local convexity to convexity}

The mentioned examples of $T_2$-preordered locally compact
$\sigma$-compact spaces which are not convex are also non-locally
convex. This fact suggests that, perhaps, we could obtain convexity
by assuming local convexity plus some topological property. This is
indeed the case and in this section we shall prove that a locally
convex $T_2$-preordered locally compact $\sigma$-compact space  is
necessarily convex. This result is important because it is often
much easier to prove local convexity than convexity. The next two
sections will then show how to obtain local convexity for a large
class of topological preordered spaces.


We need to state the next two propositions which generalize to
preorders two corresponding propositions due to Nachbin \cite[Prop.\
4,5, Chap. I]{nachbin65}. Actually the proofs given by Nachbin for
the order case work unaltered. For this reason they are omitted.

\begin{proposition} \label{pr1}
Let $E$ be a  $T_2$-preordered space. For every compact set
$K\subset E$, we have $d(K)=D(K)$ and $i(K)=I(K)$, that is, the
decreasing and increasing hulls are closed.
\end{proposition}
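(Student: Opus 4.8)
The plan is to prove $d(K)=D(K)$ (the argument for $i(K)=I(K)$ being symmetric). Since $D(K)$ is by definition the smallest closed decreasing set containing $K$, and $d(K)$ is a decreasing set containing $K$, it suffices to show that $d(K)$ is closed; then $D(K)\subset \overline{d(K)}=d(K)$, while the reverse inclusion $d(K)\subset D(K)$ is automatic because any closed decreasing set containing $K$ must contain $d(K)$. So the whole statement reduces to the single claim: \emph{for $K$ compact and $\le$ closed, the hull $d(K)=\{y: \exists x\in K,\ y\le x\}$ is closed.}

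To see that $d(K)$ is closed I would take a point $y\in \overline{d(K)}$ and produce some $x\in K$ with $y\le x$. Choose a net $y_\alpha\to y$ with each $y_\alpha\in d(K)$, so for each $\alpha$ there is $x_\alpha\in K$ with $y_\alpha\le x_\alpha$, i.e.\ $(y_\alpha,x_\alpha)\in G(\le)$. Since $K$ is compact, the net $(x_\alpha)$ has a subnet converging to some $x\in K$; passing to that subnet, $y_\alpha\to y$ still holds, so in $E\times E$ we have $(y_\alpha,x_\alpha)\to (y,x)$. Because $G(\le)$ is closed in the product topology (this is exactly the $T_2$-preordered hypothesis), the limit satisfies $(y,x)\in G(\le)$, that is $y\le x$ with $x\in K$. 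Hence $y\in d(K)$ and $d(K)$ is closed.

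The main obstacle—and the only real subtlety—is that $E$ is not assumed Hausdorff or even first countable, so I must work with nets rather than sequences and be careful that extracting a convergent subnet of $(x_\alpha)$ does not disturb the convergence $y_\alpha\to y$; the standard fact that a subnet of a convergent net converges to the same limit handles this cleanly. Compactness is used precisely to guarantee the existence of the convergent subnet of $(x_\alpha)$, and closedness of $G(\le)$ is used precisely to pass the relation $y_\alpha\le x_\alpha$ to the limit. This is indeed the verbatim adaptation of Nachbin's order-case argument, which is why the paper can legitimately omit it.
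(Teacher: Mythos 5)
Your proof is correct: every step (net in $d(K)$ converging to a closure point, extraction of a subnet converging in the compact set $K$, persistence of the limit of the other coordinate along the subnet, and passage to the limit in the closed graph $G(\le)$) is valid in an arbitrary topological space, with no Hausdorff or countability assumptions, and the reduction of $d(K)=D(K)$ to closedness of $d(K)$ is handled properly. The paper itself gives no argument, deferring to Nachbin's order-case proof, which it asserts works unaltered for preorders; that proof, however, is not the net argument you describe but the dual cover-based one: for $y\notin d(K)$ one applies, to each $x\in K$, the neighborhood characterization of a closed graph (Nachbin's Prop.\ 1: $y\nleq x$ yields neighborhoods $V\ni y$, $U\ni x$ with $i(V)\cap d(U)=\emptyset$, or simply $(V\times U)\cap G(\le)=\emptyset$), extracts a finite subcover of $K$, and intersects the corresponding finitely many neighborhoods of $y$ to get a neighborhood of $y$ missing $d(K)$. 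The two routes are equivalent expressions of compactness; yours is arguably more streamlined if the reader is comfortable with subnets, while Nachbin's is what the rest of this paper actually reuses --- the same ``closed graph gives separating neighborhoods, then compactness gives a finite subcover / cluster point'' pattern reappears explicitly in the proofs of Prop.\ \ref{jhg} and Theor.\ \ref{bwx}, where cluster points of filters on compact sets play the role of your subnet limits. So your remark that this is the ``verbatim'' Nachbin argument is slightly off as a historical claim, but mathematically the proposal is a complete and correct proof of the proposition.
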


\begin{proposition} \label{pr2}
Let $E$ be a  $T_2$-preordered  compact space. Let $F\subset V$
where $F$ is increasing and $V$ is open, then there is an open
increasing set $W$ such that $F\subset W\subset V$. An analogous
statement holds in the decreasing case.
\end{proposition}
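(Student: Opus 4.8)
The plan is to pass to complements and exploit the fact (Proposition \ref{pr1}) that decreasing hulls of compact sets are closed. I would set $A = E \setminus V$. Since $V$ is open and $E$ is compact, $A$ is a compact set, and it is disjoint from $F$ because $F \subset V$. The idea is that the only obstruction to enlarging $F$ inside $V$ is captured by the decreasing hull $d(A)$, so I would propose the candidate $W := E \setminus d(A)$ and check that it is open, increasing, and squeezed between $F$ and $V$.

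First I would verify the structural properties of $W$. By Proposition \ref{pr1} applied to the compact set $A$ we have $d(A) = D(A)$, hence $d(A)$ is closed and $W$ is open. Moreover $d(A)$ is decreasing by construction, and the complement of a decreasing set is increasing: if $x \in W$ and $x \le y$ with $y \in d(A)$, then $x \in d(A)$ by the decreasing property, a contradiction; thus $W$ is increasing.

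Then come the two inclusions. For $F \subset W$ I would argue by contradiction: if some $x \in F$ lay in $d(A)$, there would be $a \in A$ with $x \le a$; but $F$ is increasing, so $i(x) \subset F$ and therefore $a \in F$, contradicting $F \cap A = \emptyset$. For $W \subset V$ I would simply note that reflexivity of the preorder gives $A \subset d(A)$, whence $W = E \setminus d(A) \subset E \setminus A = V$.

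The argument is short, and the only point genuinely requiring care is the disjointness $F \cap d(A) = \emptyset$, where the hypothesis that $F$ is \emph{increasing} (rather than merely closed or convex) is used decisively; compactness enters solely through Proposition \ref{pr1}, to guarantee that $d(A)$ is closed. The decreasing statement follows by the dual argument, replacing $d(A)$ by $i(A)$ and using $i(A) = I(A)$.
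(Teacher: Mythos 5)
Your proof is correct and coincides with the argument the paper relies on: the paper omits the proof, stating that Nachbin's proofs for the order case work unaltered, and Nachbin's argument is exactly your complementation $W = E \setminus d(E \setminus V)$, with Proposition \ref{pr1} supplying the closedness of $d(E \setminus V)$ and the increasing character of $F$ giving the disjointness $F \cap d(E\setminus V) = \emptyset$. Nothing is missing.
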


We start with a convex analog to the previous proposition.

\begin{lemma} \label{prr}
Let $E$ be a  normally preordered  space, let $A$ be a closed
decreasing set and let $B$ be a closed increasing set. Finally, let
$S$ be a compact set and let $O$ be an open set such that $A\cap
B\cap S \subset O$, then there are an open decreasing set $U\supset
A$ and an open increasing set $V\supset B$, such that $D(U)\cap
I(V)\cap S \subset O$.
\end{lemma}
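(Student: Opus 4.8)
The plan is to reduce the statement to a pointwise separation on the compact ``obstruction set'' $K:=S\setminus O$ and then to patch finitely many separations together using compactness of $K$. First I would observe that the target conclusion $D(U)\cap I(V)\cap S\subset O$ is equivalent to $D(U)\cap I(V)\cap K=\emptyset$, where $K=S\setminus O$ is closed in $S$ and hence compact. Likewise the hypothesis $A\cap B\cap S\subset O$ says precisely that $A\cap B\cap K=\emptyset$, so each point $p\in K$ fails to lie in at least one of $A$, $B$. This reformulation is what lets compactness enter.

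The core is a pointwise claim: for each $p\in K$ there exist an open decreasing $U_p\supset A$, an open increasing $V_p\supset B$, and an open neighborhood $N_p\ni p$ with $D(U_p)\cap I(V_p)\cap N_p=\emptyset$. Assume $p\notin A$ (the case $p\notin B$ being symmetric). Then $i(p)$ is closed, since $E$ is $T_1$-preordered, and increasing by transitivity, and it is disjoint from $A$: if $q\in A\cap i(p)$ then $p\le q\in A$ would force $p\in A$ because $A$ is decreasing. Applying normality to the disjoint pair consisting of the closed decreasing set $A$ and the closed increasing set $i(p)$ yields an open decreasing $U_p\supset A$ and an open increasing $V'\supset i(p)$ with $U_p\cap V'=\emptyset$. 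Since $V'$ is open increasing, $E\setminus V'$ is a closed decreasing set containing $U_p$, so $D(U_p)\subset E\setminus V'$; as $p\in i(p)\subset V'$ we conclude $p\notin D(U_p)$. Now taking $V_p=E$ (open increasing, with $I(V_p)=E$) and $N_p=E\setminus D(U_p)$ (open, because $D(U_p)$ is closed) gives $D(U_p)\cap I(V_p)\cap N_p=D(U_p)\cap N_p=\emptyset$, as required.

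Finally I would patch. The family $\{N_p\}_{p\in K}$ is an open cover of the compact set $K$; extract a finite subcover $N_{p_1},\dots,N_{p_n}$ and put $U=\bigcap_{i} U_{p_i}$ and $V=\bigcap_{i} V_{p_i}$. These are open, and decreasing respectively increasing, since finite intersections preserve openness and monotonicity, and they contain $A$ respectively $B$. By monotonicity of the closed hulls, $U\subset U_{p_i}$ gives $D(U)\subset D(U_{p_i})$, and similarly $I(V)\subset I(V_{p_i})$. Hence if some $q\in K$ belonged to $D(U)\cap I(V)$, picking an index $i$ with $q\in N_{p_i}$ would place $q$ in $D(U_{p_i})\cap I(V_{p_i})\cap N_{p_i}=\emptyset$, a contradiction. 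Therefore $D(U)\cap I(V)\cap K=\emptyset$, which is the assertion.

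I expect the only genuine subtlety to be that the conclusion involves the closed monotone hulls $D(U_p)$ and $I(V_p)$ rather than the raw sets $U_p$, $V_p$. The key device that overcomes this is the observation that separating the closed decreasing set $A$ from the closed increasing set $i(p)$ controls not merely $U_p$ but its \emph{entire} closed decreasing hull, because the complement of the separating open increasing set is already closed and decreasing and therefore contains $D(U_p)$. Once this is in hand, the remainder is routine bookkeeping with compactness and the monotonicity of $D(\cdot)$ and $I(\cdot)$.
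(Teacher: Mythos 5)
Your proof is correct, and it shares the paper's overall skeleton: reduce to the compact obstruction set $K=S\setminus O$, separate $A$ from $i(p)$ (or $B$ from $d(p)$) by preorder normality at each $p\in K$, and patch with a finite subcover. The genuine difference is where the closed hulls $D(\cdot)$, $I(\cdot)$ get controlled. The paper covers $K$ by \emph{monotone} neighborhoods $M_y$ (increasing when $y\notin A$, decreasing when $y\in A$), first obtains open monotone sets $U'\supset A$, $V'\supset B$ satisfying only $U'\cap V'\cap S\subset O$, and then needs a second round of normality to shrink these, producing $U,V$ with $A\subset U\subset D(U)\subset U'$ and $B\subset V\subset I(V)\subset V'$. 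You instead absorb the hull control into the pointwise step: since the separating open increasing set $V'$ has closed decreasing complement containing $U_p$, you get $D(U_p)\subset E\setminus V'$ at once, and you cover $K$ by the non-monotone open sets $N_p=E\setminus D(U_p)$ (resp.\ $E\setminus I(V_p)$). The final patching then needs only the monotonicity of $D$ and $I$ under inclusion, and the last normality application disappears. Both arguments ultimately rest on the same elementary observation — the complement of an open increasing set is closed and decreasing — but the paper invokes it once at the end (inside the normal shrinking step) while you invoke it at each point; your trick of setting $V_p=E$ in the case $p\notin A$ is a clean way to keep the bookkeeping uniform. What your route buys is economy (fewer normality applications, no need for the intermediate sets $U',V'$); what the paper's route buys is that the intermediate monotone cover $\{M_{y_i}\}$ and the pair $U',V'$ are reusable objects of independent interest in arguments of this kind.
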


\begin{proof}
The set $K=S\backslash O$ being a closed subset of a compact set is
compact. Let $y\in K$, we know that $y \notin A$ or $y \notin B$. In
the former case there is an open increasing set $M_y \ni y$ and an
open decreasing set $U_y \supset A$ such that $U_y\cap
M_y=\emptyset$. If $y\in A$ (and hence $y\notin B$) there is an open
decreasing set $M_y \ni y$ and an open increasing set $V_y \supset
B$ such that $V_y\cap M_y=\emptyset$. Since $K$ is compact there are
some $y_i$, $i \in \Lambda$, $\Lambda=\{1,2,\cdots, n\}$, such that
the sets $M_{y_i}$ cover $K$. The index set $\Lambda$ splits into
the disjoint union of the two subsets $\Lambda_d$, $\Lambda_i$,
where $k\in \Lambda_d$ iff $y_k\notin A$. Let us define
$U'=\bigcap_{j \in \Lambda_d} U_{y_j}$ and $V'=\bigcap_{j\in
\Lambda_i} V_{y_j}$. The subsets $U', V'$ are such that $U'\supset
A$ and $V'\supset B$. Let us prove that $U'\cap V'\cap S\subset O$.
Indeed, suppose $z\in K=S\backslash O$,  then $z$ is contained in
some $M_{y_j}$, $j \in \Lambda$, that does not intersect $U'$ or
$V'$ depending on whether $y_j \notin A$ or not, thus $z \notin
U'\cap V'$ which implies $U'\cap V'\cap S\subset O$. By applying
preorder normality we find $U$ open decreasing set such that $A
\subset U \subset D(U) \subset U'$ and $V$ open increasing set such
that $B\subset V\subset I(V)\subset V'$, thus $D(U)\cap I(V)\cap
S\subset O$.
\end{proof}

\begin{lemma} \label{pri}
Let $E$ be a  $T_2$-preordered compact space, let $A$ be a closed
decreasing set and let $B$ be a closed increasing set. Finally, let
$O$ be an open set such that $A\cap B\subset O$. Then there are an
open decreasing set $U\supset A$ and an open increasing set
$V\supset B$, such that $D(U)\cap I(V)\subset O$.
\end{lemma}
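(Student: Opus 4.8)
The plan is to reduce this statement to Lemma \ref{prr}, whose conclusion becomes formally identical to the one sought here once the auxiliary compact set appearing there is specialized to the whole space. Since Lemma \ref{prr} is stated for normally preordered spaces, the only preparatory step is to upgrade the hypotheses ``$T_2$-preordered and compact'' to ``normally preordered.''

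First I would note that any compact space is trivially a $k_\omega$-space: choosing the admissible sequence $K_i = E$ for every $i$, a subset $O$ is open exactly when $O\cap K_i = O$ is open in $K_i = E$. Consequently the result recalled in the introduction, namely that every $T_2$-preordered $k_\omega$-space is normally preordered \cite{minguzzi11f}, applies and shows that $E$ is a normally preordered space. This is the preordered analog of the classical fact that compact $T_2$-ordered spaces are normally ordered.

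With normality established, I would invoke Lemma \ref{prr} with the choice $S = E$, which is compact because $E$ is. Under this choice the hypothesis $A\cap B\cap S \subset O$ of that lemma reduces to $A\cap B\subset O$, exactly our assumption, and its conclusion furnishes an open decreasing set $U\supset A$ and an open increasing set $V\supset B$ with $D(U)\cap I(V)\cap S \subset O$. Because $S = E$, this last inclusion is simply $D(U)\cap I(V)\subset O$, which is the desired conclusion.

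I do not anticipate a genuine obstacle, since the argument is a specialization of a stronger lemma. The single point requiring attention is the passage from compactness to preorder normality, and this is immediate from the $k_\omega$ characterization already recorded above; no additional order-theoretic or topological input is needed.
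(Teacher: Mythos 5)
Your proof is correct and follows essentially the same route as the paper: establish that $E$ is normally preordered and then apply Lemma \ref{prr} with $S=E$. The only cosmetic difference is that the paper cites the compact case of the normality theorem from \cite{minguzzi11f} directly, whereas you derive it by observing that a compact space is a $k_\omega$-space; both justifications are valid.
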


\begin{proof}
Since $E$ is a $T_2$-preordered compact space it is normally
preordered \cite[Theor.\ 2.4]{minguzzi11f}. Setting $S=E$ the
desired conclusion follows from lemma \ref{prr}.
\end{proof}

It is well known that under Hausdorffness local compactness and
strong local compactness are equivalent. Every $T_2$-ordered space
is Hausdorff thus under antisymmetry these notions of local
compactness  coincide. We can actually prove that this equivalence
holds at a single point.

Let $S$ be a subspace of $E$. In the next theorems with ``on $S$''
we shall mean ``with respect to $S$ regarded as a subspace, namely
with its induced topology and induced preorder''. On $S$ the
increasing hull of a subset $H\subset S$ will be denoted $i_S(H)$
and analogously for the decreasing hull, $d_S(H)$, and for the
corresponding closure versions, $I_S(H)$ and $D_S(H)$.

\begin{proposition} \label{pbf}
Let $E$ be a $T_2$-preordered  space. If $[x]\subset E$ admits a
compact neighborhood then for every open set $O'\supset [x]$ there
is a compact neighborhood of $[x]$ contained in $O'$. In particular,
under antisymmetry at $x$, local compactness at $x$ implies strong
local compactness at $x$.
\end{proposition}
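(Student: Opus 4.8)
The plan is to reduce everything to the compact subspace furnished by the hypothesis and then to invoke Lemma \ref{pri}. Let $N$ be a compact neighborhood of $[x]$, so that there is an open set $W$ with $[x]\subset W\subset N$. Regarded as a subspace, $N$ is a $T_2$-preordered compact space, since its induced preorder has graph $G(\le)\cap(N\times N)$, which is closed because $G(\le)$ is closed. Moreover $x\in[x]\subset N$, and because $E$ is $T_1$-preordered the sets $d(x)$ and $i(x)$ are closed; hence $A:=d_N(x)=d(x)\cap N$ is closed decreasing in $N$, $B:=i_N(x)=i(x)\cap N$ is closed increasing in $N$, and $A\cap B=(d(x)\cap i(x))\cap N=[x]$.

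Next I would fix the open target. Given the open set $O'\supset[x]$, put $O''=O'\cap W$, which is open in $E$, is contained in $N$ (so it is open in $N$), and contains $[x]$; thus $A\cap B=[x]\subset O''$. Applying Lemma \ref{pri} inside the compact space $N$ then produces an open decreasing set $U\supset A$ and an open increasing set $V\supset B$, both in $N$, such that $D_N(U)\cap I_N(V)\subset O''$ (here $D_N$ and $I_N$ are the closed hulls computed in $N$).

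It remains to manufacture from $U\cap V$ a compact neighborhood of $[x]$ lying in $O'$. The set $U\cap V$ is open in $N$ and contains $A\cap B=[x]$; writing $U\cap V=G\cap N$ with $G$ open in $E$, the set $(U\cap V)\cap W=G\cap W$ is $E$-open and satisfies $[x]\subset(U\cap V)\cap W\subset U\cap V$, so $U\cap V$ is genuinely a neighborhood of $[x]$ in $E$. I would then take $N'=\overline{U\cap V}^{N}$, the closure computed in $N$. It is compact, being closed in the compact space $N$, and it is an $E$-neighborhood of $[x]$ because it contains $U\cap V$. Finally, from $\overline{U}^{N}\subset D_N(U)$ and $\overline{V}^{N}\subset I_N(V)$ one gets $N'\subset\overline{U}^{N}\cap\overline{V}^{N}\subset D_N(U)\cap I_N(V)\subset O''\subset O'$, which proves the first assertion. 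The second assertion is then immediate: under antisymmetry at $x$ one has $[x]=\{x\}$, so a compact neighborhood of $x$ is a compact neighborhood of $[x]$, and the first part shows that inside every open neighborhood of $x$ there lies a compact neighborhood of $x$, that is, strong local compactness at $x$.

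The step I expect to demand the most care is the passage between the ambient space $E$ and the subspace $N$: one must keep the subspace hulls and closure $d_N,i_N,D_N,I_N,\overline{\phantom{U}}^{N}$ distinct from their $E$-counterparts, and must use the compact neighborhood $N$ precisely to certify that $U\cap V$---which Lemma \ref{pri} only guarantees to be open in $N$---is an honest $E$-neighborhood of $[x]$, and to supply compactness of the final closure. Everything else is routine once Lemma \ref{pri} has been applied in $N$.
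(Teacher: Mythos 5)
Your proof is correct and follows essentially the same route as the paper: restrict to the compact subspace $N$ (the paper's $K$), set $A=d(x)\cap N$, $B=i(x)\cap N$, apply Lemma \ref{pri} there, and extract a compact neighborhood of $[x]$ inside $O'$. The only cosmetic differences are that you take the closure $\overline{U\cap V}^{N}$ as the final compact neighborhood where the paper uses $D_K(U)\cap I_K(V)$ directly, and you certify that $U\cap V$ is an $E$-neighborhood of $[x]$ by intersecting with $W$, whereas the paper notes that $U\cap V$, being $K$-open and contained in the $E$-open set $O\subset K$, is already open in $E$.
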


\begin{proof}
Let $K$ be a compact neighborhood of $[x]$, $[x]\subset \textrm{Int}
K$. Let $O'$ be an open set such that $[x]\subset O'$  and define
$O=O'\cap \textrm{Int} K$. Let $A=d(x)\cap K$, $B=i(x)\cap K$. Since
$[x]\subset O$, we have $d(x)\cap i(x)\subset O$ which implies
$A\cap B \subset O$. We work on the $T_2$-preordered compact space
$K$ and apply lemma \ref{pri}. There  are an open decreasing set
$U\supset A$ (on $K$) and an open increasing set $V\supset B$ (on
$K$), such that $D_K(U)\cap I_K(V)\subset O$. Since $U\cap V\subset
O\subset K$, the $K$-open set $U\cap V$ is actually open in $E$. The
set $D_K(U)\cap I_K(V)$ being a closed subset of $K$ is compact, and
containing $U\cap V$, it is actually a compact neighborhood of $x$
contained in $O$ and hence $O'$.
%
%
\end{proof}

%
%
%

\begin{lemma} \label{prj}
Let $E$ be a $T_2$-preordered  space, $S$ a compact subset, $O$  an
open set on $E$, $C$  a convex set on $E$, and $x$ a point in $S$,
such that $[x]_S \subset O\subset C\subset S$ where
$[x]_S=d_S(x)\cap i_S(x)$. Then $[x]=[x]_S$,  and there are an open
convex neighborhood (on $E$) of $[x]$ contained in $O$, and a
$c$-compact neighborhood (on $E$) of $[x]$ contained in $O$.
\end{lemma}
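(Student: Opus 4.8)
The plan is to push everything onto the compact subspace $S$, where Lemma \ref{pri} is available, and then to use the convexity of $C$ as a ``trap'' that forces the $E$-hulls of the sets produced on $S$ to stay inside $S$, so that subspace constructions become genuine $E$-constructions. First I would settle the identity $[x]=[x]_S$. Since the preorder on $S$ is induced, $d_S(x)=d(x)\cap S$ and $i_S(x)=i(x)\cap S$, hence $[x]_S=[x]\cap S$, so it suffices to show $[x]\subset S$. By reflexivity $x\in[x]_S\subset C$; and if $y\in[x]$, then $y\le x$ together with $x\le y$ give $y\in d(C)\cap i(C)=C$ by convexity of $C$. Thus $[x]\subset C\subset S$ and therefore $[x]=[x]\cap S=[x]_S$.

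Next I would work inside $S$. As a subspace of a $T_2$-preordered space, $S$ is $T_2$-preordered, and being compact it is normally preordered \cite[Theor.\ 2.4]{minguzzi11f}. The sets $A=d_S(x)$ and $B=i_S(x)$ are respectively closed decreasing and closed increasing on $S$, and $A\cap B=[x]_S\subset O$, where $O$ (being $E$-open and contained in $S$) is open on $S$. Applying Lemma \ref{pri} on $S$ yields an open decreasing set $U$ and an open increasing set $V$ on $S$ with $A\subset U$, $B\subset V$ and $D_S(U)\cap I_S(V)\subset O$. In particular $U\cap V\subset D_S(U)\cap I_S(V)\subset O\subset C\subset S$, and since $[x]=[x]_S\subset A\cap B\subset U\cap V$, the set $U\cap V$ is a neighborhood of $[x]$ on $S$.

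The main step, and the point where I expect the real obstacle, is transferring these subspace objects to $E$: convexity on $S$ does not a priori yield convexity on $E$. Openness is routine: writing $U\cap V=W\cap S$ with $W$ open on $E$ and using $U\cap V\subset O\subset S$ with $O$ open on $E$, one gets $U\cap V=W\cap O$, open on $E$. For convexity, $C$ does the work. Given $z\in d(U\cap V)\cap i(U\cap V)$ with the hulls taken on $E$, pick $n_1,n_2\in U\cap V\subset C$ with $n_1\le z\le n_2$; convexity of $C$ forces $z\in C\subset S$. Then $n_1\le z$ with $n_1\in V$ ($S$-increasing) gives $z\in i_S(n_1)\subset V$, while $z\le n_2$ with $n_2\in U$ ($S$-decreasing) gives $z\in d_S(n_2)\subset U$, so $z\in U\cap V$. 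Hence $U\cap V$ is convex on $E$, and it is the desired open convex neighborhood of $[x]$ contained in $O$.

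Finally, set $N:=D_S(U)\cap I_S(V)$. As a closed subset of the compact space $S$ it is compact, and since $U\cap V\subset N\subset O$ it is an $E$-neighborhood of $[x]$ contained in $O$. The same trapping argument, now applied to $N$ and using that $D_S(U)$ is $S$-decreasing while $I_S(V)$ is $S$-increasing, shows $N$ is convex on $E$. Because $N$ is compact, Proposition \ref{pr1} gives $d(N)=D(N)$ and $i(N)=I(N)$, so convexity yields $N=d(N)\cap i(N)=D(N)\cap I(N)$; that is, $N$ is a $c$-set on $E$, and being compact it is the required $c$-compact neighborhood of $[x]$. The essential idea throughout is that the inclusion $\subset O\subset C\subset S$ confines each $E$-hull operation to $S$, reducing it to the corresponding $S$-operation already controlled by Lemma \ref{pri}.
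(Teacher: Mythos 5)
Your proof is correct and follows essentially the same route as the paper: both establish $[x]=[x]_S$ via the convexity of $C$, apply Lemma \ref{pri} in the compact subspace $S$ to get $U\cap V$ and $D_S(U)\cap I_S(V)$, and use the chain $\subset O\subset C\subset S$ to trap the $E$-hulls inside $S$ so that the subspace objects are open/convex/$c$-compact in $E$ (invoking Prop.\ \ref{pr1} for the $c$-set property). The only difference is organizational: the paper states the transfer principle first and then constructs the sets, while you construct first and transfer afterwards.
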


\begin{proof}
If  $z\in d(x)\cap i(x)$ then $z\in d(C)\cap i(C)=C\subset S$, thus
$z\in d_S(x)\cap i_S(x)=[x]_S$, that is $[x]=[x]_S$.

Let $N$ be a ($S$-)convex neighborhood of $[x]$ contained in $O$,
where convexity refers to the subspace $S$.  We are going to prove
that $N$ is convex in $E$. Indeed
\[
d(N)\cap i(N)\subset d(C)\cap i(C) \subset C,
\]
thus if $z\in d(N)\cap i(N)$ then $z\in S$ which implies
\[
d(N)\cap i(N)=d_S(N)\cap i_S(N)=N,
\]
by convexity of $N$ in $S$, thus $N$ is indeed convex in $E$.

Suppose that we prove the existence of an open convex neighborhood
$N$ of $[x]$ contained in $O$ in the $T_2$-preordered subspace $S$.
Since $N\subset O$ and $O\subset S$ is open in $E$, $N$ is open in
$E$ and also convex by the above argument.

Analogously, suppose that we prove the existence of a $c$-compact
neighborhood $N$ of $[x]$ contained in $O$ in the $T_2$-preordered
subspace $S$ with the additional property that it contains an open
convex (in the subspace $S$) neighborhood $N'$ of $[x]$ contained in
$O$.  Since $N$ is compact on $S$ it is compact on $E$. The equation
$d(N)\cap i(N)=d_S(N)\cap i_S(N)=N$ proves that $N$ is closed in
$E$, and that it is a $c$-compact set in $E$ (recall Prop.
\ref{pr1}). Since it contains $N'$ which contains $[x]$ it is a
$c$-compact neighborhood of $[x]$.

Thus for the remainder of the proof we can work in the
$T_2$-preordered compact subspace $S$. Let $A=d_S(x)$ and
$B=i_S(x)$, so that $A\cap B=[x]_S\subset O$. Lemma \ref{pri} proves
that there are an open convex neighborhood $U\cap V$ of $[x]$
contained in $O$ (according to the subspace $S$), and a $c$-compact
neighborhood $D_S(U)\cap I_S(V)$ of $[x]$ contained in $O$
(according to the subspace $S$), which finishes the proof.
\end{proof}

\begin{lemma} \label{gvh}
If local convexity holds at $x\in E$ then $[x]$ is compact and every
open neighborhood of $x$ is also an open neighborhood of $[x]$.
\end{lemma}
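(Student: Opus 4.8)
The plan is to establish the second assertion—that every open neighborhood of $x$ contains $[x]$—first, and then to read off compactness of $[x]$ from it; so despite the order of the statement I would prove the two claims in reverse.

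First I would fix an arbitrary open neighborhood $O$ of $x$ and use local convexity to pick a convex neighborhood $N$ of $x$ with $N\subset O$ (the convex neighborhoods form a base for the neighborhood filter at $x$, so such an $N$ exists); note $x\in N$. Since $N$ is convex it satisfies $N=d(N)\cap i(N)$, and from $x\in N$ monotonicity of the hulls yields $d(x)\subset d(N)$ and $i(x)\subset i(N)$. Intersecting these, $[x]=d(x)\cap i(x)\subset d(N)\cap i(N)=N\subset O$. Hence every open neighborhood of $x$ already contains $[x]$, which is precisely the statement that it is an open neighborhood of $[x]$.

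Next I would feed this containment into the subspace $[x]$ to obtain compactness. The step just proved says that for every open $U\subset E$ with $x\in U$ one has $[x]\subset U$, and therefore $U\cap[x]=[x]$; equivalently, the only relatively open subset of $[x]$ containing the point $x$ is $[x]$ itself. Given any open cover of the subspace $[x]$, some member contains $x$ and is thus forced to equal $[x]$, so that single member is already a finite subcover. Compactness of $[x]$ follows immediately.

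The one place a first attempt is likely to go astray is the compactness claim. It is tempting to try to derive it from a local compactness hypothesis—trapping $[x]$ in a compact neighborhood and using that $[x]=d(x)\cap i(x)$ is closed in any $T_2$-preordered space—but no such hypothesis is needed: the containment $[x]\subset O$ valid for all open $O\ni x$ degenerates the subspace topology of $[x]$ at $x$ so severely that compactness is forced by the bare open-cover definition. The real content is therefore the elementary observation that local convexity pins $[x]$ inside every neighborhood of $x$; once that is in hand, both conclusions are routine applications of the definitions of convex set, monotone hull, and neighborhood base.
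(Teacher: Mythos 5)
Your proof is correct and follows essentially the same route as the paper's: local convexity yields a convex neighborhood $N\subset O$ with $[x]\subset d(N)\cap i(N)=N$, and compactness then follows from the one-element-subcover observation. The paper's proof is virtually identical, including the order (containment first, compactness second), so there is nothing to change.
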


\begin{proof}
Let $O$ be an open neighborhood of $x$ and let $C$ be a convex set
such that $x\in C\subset O$, then $[x]=d(x)\cap i(x)\subset d(C)\cap
i(C)=C \subset O$, thus $O$ is also an open neighborhood for $[x]$.
Let us consider an open covering of $[x]$, then there is some open
set of the covering which includes $x$ and hence $[x]$, thus every
open covering admits a subcovering of only one element.
\end{proof}

\begin{proposition} \label{nic}
Let $E$ be a $T_2$-preordered space. If $E$ is locally compact and
locally convex at $x\in E$, then the topology at $x$ admits a base
of $c$-compact neighborhoods, and a base of open convex
neighborhoods (that is, weak convexity holds at $x$).
\end{proposition}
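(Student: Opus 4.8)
The plan is to reduce everything to the technical Lemma \ref{prj}, whose hypotheses I must set up by combining local compactness and local convexity at $x$ with the two preparatory results Lemma \ref{gvh} and Proposition \ref{pbf}. The goal is: for an arbitrary open neighborhood $O$ of $x$, to produce both an open convex neighborhood of $x$ and a $c$-compact neighborhood of $x$, each contained in $O$. Since $O$ is arbitrary, this gives the two announced bases at once (and the open convex base is precisely weak convexity at $x$).

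First I would invoke Lemma \ref{gvh}: local convexity at $x$ makes $[x]$ compact and turns every open neighborhood of $x$ into an open neighborhood of all of $[x]$. In particular, the interior of any compact neighborhood of $x$, which exists by local compactness at $x$, is an open neighborhood of $x$ and hence contains $[x]$; so $[x]$ itself admits a compact neighborhood. This is exactly the input required by Proposition \ref{pbf}.

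Next, fixing the target open neighborhood $O\ni x$ (which by Lemma \ref{gvh} is also an open neighborhood of $[x]$), Proposition \ref{pbf} provides a compact neighborhood $S$ of $[x]$ with $S\subset O$ and $[x]\subset \textrm{Int}\, S$. I would then apply local convexity at $x$ to the open neighborhood $\textrm{Int}\, S$, obtaining a convex neighborhood $C$ of $x$ together with its open kernel $O'=\textrm{Int}\, C$ satisfying $x\in O'\subset C\subset \textrm{Int}\, S\subset S$. The key verification, and the only genuinely delicate point, is that $[x]_S\subset O'$: this holds because $[x]_S=d_S(x)\cap i_S(x)\subset d(x)\cap i(x)=[x]$, while Lemma \ref{gvh} guarantees that the open neighborhood $O'$ of $x$ contains the whole of $[x]$. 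With the chain $[x]_S\subset O'\subset C\subset S$ in hand, the hypotheses of Lemma \ref{prj} are met, with $S$ compact, $O'$ open, and $C$ convex.

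Finally, Lemma \ref{prj} delivers an open convex neighborhood of $[x]$ (on $E$) and a $c$-compact neighborhood of $[x]$ (on $E$), both contained in $O'\subset O$. Since $x\in[x]$, each of these is a neighborhood of $x$ as well. Letting $O$ range over all open neighborhoods of $x$ then yields simultaneously the base of $c$-compact neighborhoods and the base of open convex neighborhoods, establishing weak convexity at $x$. I expect no real obstacle beyond the bookkeeping of inclusions; the substantive content lives in Lemma \ref{prj} and Proposition \ref{pbf}, so that this proposition is essentially an assembly of those pieces, with the inclusion $[x]_S\subset O'$ serving as the hinge on which the application of Lemma \ref{prj} turns.
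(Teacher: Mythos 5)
Your proof is correct and follows essentially the same route as the paper: both reduce the statement to Lemma \ref{prj} after using Lemma \ref{gvh} to guarantee that $[x]$ lies in every open neighborhood of $x$, so that local compactness and local convexity produce the chain $[x]_S\subset O'\subset C\subset S$. The only difference is that you invoke Proposition \ref{pbf} to shrink the compact set $S$ into the target neighborhood, whereas the paper dispenses with this step by placing only the open set (not the compact set) inside the target, which is all that Lemma \ref{prj} requires since its conclusions are contained in the open set.
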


\begin{proof}
Let $N$ be any open neighborhood of $x$. By local compactness there
are a compact set $S$ and an open set $O'$ such that $x\in O'\subset
S\cap N$. By local convexity there is a convex set $C$ and an open
set $O$ such that $x\in O\subset C\subset O'$. By local convexity
(lemma \ref{gvh}) $[x]\subset O$. By lemma \ref{prj} there are an
open convex neighborhood of $x$ contained in $O$ (and hence $N$) and
a $c$-compact neighborhood of $x$ contained in $O$ (and hence $N$).
\end{proof}

\begin{corollary} \label{cid}
Every locally compact and locally convex closed preordered space is weakly convex.
\end{corollary}

\begin{lemma} \label{hag}
Let $E$ be a normally preordered space, $S$ a compact subset, $A$ a
closed decreasing set on $S$ and $B$ a closed increasing set on $S$.
Further, let $O$ be an open and convex set on $E$ (not necessarily
contained in $S$) such that
\[
A\cap B\subset O,
\]
then there are an open decreasing set $U$ on $S$ and an open
increasing set $V$ on $S$, such that $A\subset U$, $B\subset V$, and
\[
d(D_S(U))\cap i(I_S(V)) \subset O.
\]
\end{lemma}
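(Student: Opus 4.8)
The plan is to reduce the whole statement to the compact subspace $S$, invoke the already-proved separation result Lemma \ref{pri} there, and then exploit the convexity of $O$ to upgrade the conclusion from the intrinsic hulls on $S$ to the ambient hulls $d(\cdot)$ and $i(\cdot)$ on $E$. First I would note that $S$, being a subspace of the normally (hence $T_2$-) preordered space $E$, is itself a $T_2$-preordered compact space, so Lemma \ref{pri} may be applied with $S$ in the role of the ambient space. I would also record the elementary identities $d_S(T)=S\cap d(T)$ and $i_S(T)=S\cap i(T)$ for $T\subset S$, which hold because the preorder on $S$ is the induced one.

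Since $A,B\subset S$ we have $A\cap B\subset S$, and by hypothesis $A\cap B\subset O$, so $O\cap S$ is open in $S$ and contains $A\cap B$. Applying Lemma \ref{pri} on $S$ to the closed decreasing set $A$, the closed increasing set $B$, and the open set $O\cap S$ yields an open decreasing set $U$ on $S$ with $A\subset U$ and an open increasing set $V$ on $S$ with $B\subset V$ such that, writing $P:=D_S(U)$ and $Q:=I_S(V)$, one has $P\cap Q\subset O\cap S\subset O$. Note that $P$ is closed and decreasing in $S$ (so $S\cap d(P)=P$) and $Q$ is closed and increasing in $S$ (so $S\cap i(Q)=Q$), and both are compact subsets of $S$.

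The crucial step is to show $d(P)\cap i(Q)\subset O$, with $d,i$ now taken in $E$. I would argue pointwise. Let $z\in d(P)\cap i(Q)$; then there are $p\in P$ and $q\in Q$ with $q\le z\le p$, whence $q\le p$ with both $p,q\in S$. Because $P$ is decreasing in $S$ and $q\in S$ satisfies $q\le p\in P$, the identity $S\cap d(P)=P$ forces $q\in P$; symmetrically, since $Q$ is increasing in $S$ and $p\in S$ satisfies $p\ge q\in Q$, the identity $S\cap i(Q)=Q$ forces $p\in Q$. Hence $p,q\in P\cap Q\subset O$. Finally, the convexity of $O$ gives $O=d(O)\cap i(O)$; from $z\le p\in O$ we obtain $z\in d(O)$, and from $z\ge q\in O$ we obtain $z\in i(O)$, so $z\in O$. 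This establishes $d(D_S(U))\cap i(I_S(V))\subset O$ and finishes the argument.

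I expect the main obstacle to be exactly this last passage from the intrinsic hulls $D_S(U),I_S(V)$ to the ambient hulls $d(\cdot),i(\cdot)$: a bare application of Lemma \ref{pri} only controls $P\cap Q$ inside $S$, and without the convexity of $O$ a point $z$ sandwiched between $P$ and $Q$ yet lying outside $S$ would have no reason to fall in $O$. The decisive observation is that the decreasing character of $P$ and the increasing character of $Q$ automatically push both endpoints $p$ and $q$ back into $P\cap Q$, which is precisely what allows the convexity $O=d(O)\cap i(O)$ to close the gap.
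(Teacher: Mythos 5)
Your proof is correct, and it takes a genuinely different --- and substantially more economical --- route than the paper's. The decisive observation is the one you flag yourself: since $P=D_S(U)$ is decreasing \emph{in $S$} and $Q=I_S(V)$ is increasing \emph{in $S$}, any chain $q\le z\le p$ with $p\in P$, $q\in Q$ gives $q\le p$ by transitivity, and then (because $p,q\in S$) monotonicity within $S$ forces both endpoints into $P\cap Q\subset O\cap S$, after which convexity of $O$ absorbs $z$. This collapses the whole lemma to a single application of Lemma \ref{pri} inside $S$. The paper instead runs a two-stage construction: it first pads $A$ and $B$ with nested monotone sets $\hat U\subset D_S(\hat U)\subset\check U$ (and dually for $B$) via Lemma \ref{pri} and preorder normality of $S$; it then invokes preorder normality of the \emph{ambient} space $E$ to separate the compact monotone hulls $d(A\setminus\hat V)$ from $i(B)$ and $i(B\setminus\hat U)$ from $d(A)$, producing global monotone open sets $\tilde U_A,\tilde V_A,\tilde U_B,\tilde V_B$; from these it assembles auxiliary open sets $P_A,P_B\subset S$, verifies $d(P_A)\cap i(P_B)\subset O$ using the disjointness of the tilde sets together with convexity of $O$, and finally shrinks back down with Prop.\ \ref{pr2} and normality of $S$ so that $D_S(U)\subset P_A$ and $I_S(V)\subset P_B$. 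Besides brevity, your route buys something concrete: preorder normality of $E$ is never used --- you only need $E$ to be $T_2$-preordered, so that the compact subspace $S$ is $T_2$-preordered and hence normally preordered, which is all that Lemma \ref{pri} requires. Your argument therefore establishes the lemma under formally weaker hypotheses, whereas the paper's stated hypothesis of normality on $E$ is what its heavier separation machinery consumes.
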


\begin{proof}
Since $S$ is a subspace and the $T_2$-preorder property is
hereditary, the subset  $S$, with the induced preorder and topology,
is a $T_2$-preordered space and, being compact, it is a normally
preordered space \cite{minguzzi11f}.

By lemma \ref{pri} and by preorder normality of $S$ there are
$\hat{U}, \check{U}\subset S$, open decreasing sets on $S$ and
$\hat{V},\check{V}\subset S$, open increasing sets on $S$ such that
$\check{U}\cap \check{V}\subset O\cap S$ and
\begin{align*}
A\subset \hat{U}\subset D_S(\hat{U}) \subset \check{U},\qquad
B\subset \hat{V}\subset I_S(\hat{V})\subset \check{V}.
\end{align*}
The set $A\backslash \hat{V}\subset S$ is closed on $S$ and hence
compact on both $S$ and $E$, decreasing on $S$ and disjoint from $B$
thus, $d(A\backslash \hat{V}) \cap i(B)=\emptyset$ where
$d(A\backslash \hat{V})$ is closed decreasing on $E$ and $i(B)$ is
closed increasing on $E$. By preorder normality of $E$ there are
$\tilde{U}_A$ open decreasing on $E$ and $\tilde{V}_A$ open
increasing on $E$, such that
\[d(A\backslash \hat{V})\subset \tilde{U}_A, \quad i(B)\subset
\tilde{V}_A\quad \textrm{ and } \quad \tilde{U}_A\cap
\tilde{V}_A=\emptyset.\]

Analogously, $B\backslash \hat{U}$ is closed on $S$, hence compact
on both $S$ and $E$, increasing on $S$ and disjoint from $A$,
$i(B\backslash \hat{U})$ is closed increasing in $E$, $d(A)$ is
closed decreasing on $E$, we have $i(B\backslash \hat{U})\cap
d(A)=\emptyset$ and we find $\tilde{U}_B$ open decreasing on $E$,
$\tilde{V}_B$ open increasing on $E$ such that
\[d(A)\subset \tilde{U}_B, \quad i(B\backslash \hat{U})\subset
\tilde{V}_B\quad  \textrm{ and } \quad \tilde{U}_B\cap
\tilde{V}_B=\emptyset.\]

Let us define the open subsets of $S$
\begin{align*}
P_A&=(O\cup \tilde{U}_A) \cap \tilde{U}_B\cap \check{U}, \\
P_B&=(O\cup \tilde{V}_B) \cap \tilde{V}_A\cap \check{V}.
\end{align*}
We have $A \subset P_A$ because $A\subset \tilde{U}_B\cap \check{U}$
and if $x\in A\backslash \hat{V}$ then $x\in \tilde{U}_A$ while if
$x \in \hat{V}\cap A\subset \hat{V}\cap \hat{U}\subset O$ we have
$x\in O$. Analogously, $B\subset P_B$.

Let us prove that $d(P_A)\cap i(P_B)\subset O$.  If $z \in
d(P_A)\cap i(P_B)$ then there are $x\in P_A$ and $y\in P_B$, such
that $y \le z \le x$. The possibility $x\in \tilde{U}_A$ is excluded
because $y\in P_B\subset \tilde{V}_A$, and $\tilde{U}_A\cap
\tilde{V}_A=\emptyset$. Analogously, $y \in \tilde{V}_B$ is excluded
because  $x \in P_A\subset \tilde{U}_B$ and $\tilde{V}_B\cap
\tilde{U}_B=\emptyset$. Thus
\begin{align*}
& x\in P_A\backslash \tilde{U}_A\subset O\cap \tilde{U}_B\cap
\check{U},
\\
& y\in P_B\backslash \tilde{V}_B\subset O\cap \tilde{V}_A\cap
\check{V}.
\end{align*}
Since $x,y \in O$ which is convex we obtain $z\in O$, that is
$d(P_A)\cap i(P_B)\subset O$.

Using Prop.\ \ref{pr2}, since $P_A$ is open in $S$ and $A$ is
decreasing in $S$ there is $U'$ open decreasing on $S$ such that
$A\subset U'\subset P_A$ and applying preorder normality of $S$
there is $U$, open decreasing on $S$, such that $A \subset U\subset
D_S(U)\subset U'\subset P_A$. Analogously, we find $V,V',$ open
increasing sets on $S$, such that $B \subset V\subset I_S(V)\subset
V'\subset P_B$. We have
\[
d(D_S(U)) \cap i(I_S(V)) \subset d(P_A)\cap i(P_B)\subset O.
\]
\end{proof}


\begin{lemma} \label{pry}
Let $E$ be a  $T_2$-preordered $k_\omega$-space, $x\in E$,  and let
$O$ be an open and convex neighborhood of $x$. Then there are an
open decreasing set $U$ and  open increasing set $V$, such that
$x\in U\cap V\subset O$.
\end{lemma}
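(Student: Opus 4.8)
The plan is to exploit the $k_\omega$ structure to reduce the global statement to a countable family of compact subspaces, apply Lemma \ref{hag} on each of them, and then reassemble the pieces. Since $E$ is a $T_2$-preordered $k_\omega$-space it is normally preordered, so Lemma \ref{hag} is at our disposal; I would fix an admissible exhaustion $K_1\subset K_2\subset\cdots$ with $\bigcup_n K_n=E$ and $x\in K_1$ (recall $K_1$ may be taken to be any chosen compact set), and note that convexity of $O$ gives $[x]=d(x)\cap i(x)\subset d(O)\cap i(O)=O$. The goal is to build by induction two nested sequences $W_n$, open and decreasing in the subspace $K_n$, and $W'_n$, open and increasing in the subspace $K_n$, with $x\in W_n\cap W'_n$, $W_n\subset W_{n+1}$, $W'_n\subset W'_{n+1}$, and carrying the invariant
\[
d(D_{K_n}(W_n))\cap i(I_{K_n}(W'_n))\subset O. \qquad (\star_n)
\]
For the base step I apply Lemma \ref{hag} on $S=K_1$ with $A=d_{K_1}(x)$ and $B=i_{K_1}(x)$ (closed, decreasing resp.\ increasing on $K_1$, and with $A\cap B=[x]_{K_1}\subset[x]\subset O$), obtaining $W_1,W'_1$ and $(\star_1)$.

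For the inductive step I would set, using Prop.\ \ref{pr1} to guarantee closedness of the global hulls,
\[
A=d(D_{K_n}(W_n))\cap K_{n+1},\qquad B=i(I_{K_n}(W'_n))\cap K_{n+1}.
\]
These are closed and respectively decreasing/increasing in the subspace $K_{n+1}$, they contain $W_n$ and $W'_n$, and by $(\star_n)$ they satisfy $A\cap B\subset O$. Applying Lemma \ref{hag} on $S=K_{n+1}$ with this $A,B$ and the convex $O$ yields an open decreasing $W_{n+1}\supset A$ and an open increasing $W'_{n+1}\supset B$ on $K_{n+1}$ for which $(\star_{n+1})$ holds; since $W_n\subset A\subset W_{n+1}$ and $W'_n\subset B\subset W'_{n+1}$, the nesting and the condition $x\in W_{n+1}\cap W'_{n+1}$ are preserved.

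I would then set $U=\bigcup_n W_n$ and $V=\bigcup_n W'_n$. Using the nesting one checks $U\cap K_m=\bigcup_{n\ge m}(W_n\cap K_m)$, a union of sets open in $K_m$, so $U$ is open in $E$ by the $k_\omega$ property (and likewise $V$). The set $U$ is decreasing because any relation $z\le y$ with $y\in U$ lives inside some $K_N$ on which $W_N$ is decreasing and contains $y$, forcing $z\in W_N\subset U$; symmetrically $V$ is increasing. Finally nesting gives $U\cap V=\bigcup_n(W_n\cap W'_n)\subset O$ (each $W_n\cap W'_n\subset O$ being a consequence of $(\star_n)$), while $x\in W_1\cap W'_1\subset U\cap V$, which is the desired conclusion.

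I expect the main obstacle to be identifying the correct inductive invariant. The naive choice $W_n\cap W'_n\subset O$ does not survive the passage from $K_n$ to $K_{n+1}$, because enlarging the sets so as to contain the downward and upward extensions of $W_n,W'_n$ can push their intersection outside $O$; it is precisely the global-hull form $(\star_n)$, together with the fact that Lemma \ref{hag} regenerates exactly this form at the next stage, that makes the induction close. A secondary technical point is that the $K_n$ need not be closed in $E$ (as $E$ is not assumed Hausdorff), which is why the argument is phrased throughout in terms of hulls relative to $K_n$ and their global extensions $d(D_{K_n}(\cdot))$, $i(I_{K_n}(\cdot))$ rather than ambient closures.
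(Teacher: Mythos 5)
Your proposal is correct and follows essentially the same route as the paper's own proof: the same inductive construction over an admissible exhaustion $K_1\subset K_2\subset\cdots$, the same invariant $d(D_{K_n}(\cdot))\cap i(I_{K_n}(\cdot))\subset O$ regenerated at each stage by Lemma \ref{hag}, and the same assembly of $U,V$ as nested unions whose openness follows from the $k_\omega$ property. The only differences are cosmetic (e.g.\ you write the new closed sets as $d(D_{K_n}(W_n))\cap K_{n+1}$, which coincides with the paper's subspace hull $d_{K_{n+1}}(D_{K_n}(W_n))$), and your explicit tracking of $x\in W_n\cap W'_n$ makes a point the paper leaves implicit.
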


\begin{proof}
We already know that $E$ is normally preordered \cite{minguzzi11f}.
Since $O$ is convex $[x]\subset O$.  Let $K_i$, $K_i\subset
K_{i+1}$, be an admissible sequence for the $k_\omega$-space $E$.
Without loss of generality we can assume $x\in K_1$. Each $K_i$
endowed with the induced topology and preorder is a compact
$T_2$-preordered space.


Let $A_1=d(x)\cap K_1$, $B_1=i(x)\cap K_1$. We have that $A_1$ is
closed decreasing in $K_1$, $B_1$ is closed increasing in $K_1$ and
$A_1\cap B_1\subset O$. Since $K_1$ is compact, by lemma \ref{hag}
(with $S=K_1$) we can find $U_1\supset A_1$, open decreasing set in
$K_1$, and $V_1\supset B_1$, open increasing set in $K_1$, such that
$d(D_1(U_1))\cap i(I_1(V_1))\subset O$, where $D_1$ and $I_1$ are
the closed-hull maps of $K_1$. Observe that $D_1(U_1)$ and
$I_1(V_1)$ being closed subsets of $K_1$ are compact in $E$. We
define $A_2=d_2(D_1(U_1))$ and $B_2=i_2(D_1(V_1))$, where $A_2$ is
clearly closed decreasing in $K_2$, and $B_2$ is closed increasing
in $K_2$. We have $A_2\cap B_2\subset d(D_1(U_1))\cap
i(I_1(V_1))\subset O$. We can proceed applying again  lemma
\ref{hag} with $S=K_2$. Thus proceeding inductively, given
$A_i,B_i\subset K_i$, $A_i\cap B_i\subset O$ we find $U_i$, $V_i$
respectively open decreasing and open increasing subsets of $K_i$
such that $U_i\supset A_i$, $V_i\supset B_i$, $d(D_i(U_i))\cap
i(I_i(V_i))\subset O$, and define $A_{i+1}=d_{i+1}(D_i(U_i))$ and
$B_{i+1}=i_{i+1}(D_i(V_i))$.

Note that $V_j \subset {B}_{j+1} \subset V_{j+1}$ and analogously,
$U_j\subset U_{j+1}$.

Let us define the sets $U=\bigcup_{j=1} U_j$ and $V=\bigcup_{j=1}
V_j$. The set $V$ is open because $V\cap K_s=\bigcup_{j\ge 1}
(V_j\cap K_s)=\bigcup_{j\ge s} (V_j\cap K_s)$, and the set
$V_j\subset K_j$ is open in $K_j$ so that, since for $j\ge s$, $K_s
\subset K_j$, $V_j\cap K_s$ is open in $K_s$ and so is the union
$V\cap K_s$. The
 $k_\omega$-space property implies that $V$ is open. Analogously, $U$ is
open.

Let us prove that $V$ is increasing. Let $w\in V$ then there is some
$j\ge 1$ such that $w\in V_j \subset K_j$. Let $y\in i(w)$, then we
can find some $r\ge j$ such that $y \in K_r$. Since $V_j \subset
V_r$, $w\in V_r$, and since $V_r$ is increasing on $K_r$, $y \in
V_r$ thus $y\in V$. Analogously, $U$ is decreasing. Finally, if
$z\in U\cap V$ then there are some $j,k \ge 1$ such that $z \in
U_j\cap V_k$ and setting $r=\max(j,k)$, $z\in U_r\cap V_r$ thus
\[
U\cap V\subset \bigcup_{r=1} (U_r\cap V_r)\subset  O'\subset O.
\]
\end{proof}
%
%
%
%

As an immediate consequence we obtain the desired result.

\begin{theorem}
Every weakly convex $T_2$-preordered $k_\omega$-space is a convex
normally preordered space (and hence quasi-uniformizable).
\end{theorem}

\begin{remark} \label{vux}
Actually we proved something more, namely that a $T_2$-preordered
$k_\omega$-space which is weakly convex at $x$ is convex at $x$.
Thus, by Prop.\ \ref{nic}, in a $T_2$-preordered $k_\omega$-space
$E$, if local convexity and local compactness hold at $x$, then
convexity holds at $x$.
\end{remark}

\begin{corollary} \label{bak}
Every locally convex $T_2$-preordered locally compact
$\sigma$-compact space is a convex normally preordered space (and
hence quasi-uniformizable).
\end{corollary}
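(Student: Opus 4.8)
The plan is to reduce the statement entirely to the machinery already assembled in this section, since a locally compact $\sigma$-compact space is precisely the setting in which the $k_\omega$-property becomes available. First I would invoke the observation recorded in the introduction that, under local compactness, $\sigma$-compactness is equivalent to the $k_\omega$-property \cite{minguzzi11c}. Hence the space $(E,\mathscr{T},\le)$ at hand is a $T_2$-preordered $k_\omega$-space, and in particular it is normally preordered \cite{minguzzi11f}.

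Next I would upgrade the hypothesis of local convexity to weak convexity. This is exactly the content of Corollary \ref{cid}, itself a consequence of Proposition \ref{nic}: since $E$ is locally compact and locally convex at each of its points, the family of open convex neighborhoods of every point is a base for its neighborhood system, so $E$ is weakly convex.

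Finally I would apply the (unnumbered) theorem immediately preceding Remark \ref{vux}, which asserts that every weakly convex $T_2$-preordered $k_\omega$-space is a convex normally preordered space. The parenthetical conclusion ``hence quasi\hyph uniformizable'' is then immediate from the fact recalled in the preliminaries that convex normally preordered spaces are completely regularly preordered, and therefore quasi\hyph uniformizable by Nachbin's characterization \cite{nachbin65,fletcher82}.

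As for obstacles, there is essentially no hard step: the corollary is a routine assembly of the three ingredients above, and all the genuine work has been carried out in Lemmas \ref{hag} and \ref{pry} and in Proposition \ref{nic}. The only point demanding minor care is the first one, namely confirming that the conjunction of local compactness and $\sigma$-compactness really delivers the $k_\omega$-property in the non\hyph Hausdorff setting adopted here; this is precisely what the cited equivalence guarantees, with the understanding that \emph{locally compact} is used throughout in the weak sense that each point possesses a compact neighborhood.
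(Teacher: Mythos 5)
Your proof is correct and follows essentially the same route as the paper's own: reduce locally compact $\sigma$-compact to the $k_\omega$-property, use Corollary \ref{cid} to pass from local convexity to weak convexity, and conclude via the theorem on weakly convex $T_2$-preordered $k_\omega$-spaces. The paper's proof is merely a more compressed statement of exactly these ingredients.
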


\begin{proof}
Every locally compact $\sigma$-compact space is a $k_\omega$-space,
and under local compactness local convexity and weak convexity are
equivalent (Cor.\ \ref{cid}).
\end{proof}

\section{Convexity of $k$-preserving spaces}

The next definition is inspired by the property of global
hyperbolicity in Lorentzian geometry, see Sect.\ \ref{nod}.

\begin{definition}
A $T_2$-preordered space $E$ is  {\em $k$-preserving }  if every
compact set $K\subset E$ has a compact convex hull $d(K)\cap i(K)$.
\end{definition}

\begin{proposition} \label{nid}
Let $E$ be a $T_2$-preordered space. If the topology does not
distinguish the points of $[x]$ (e.g.\ if $E$ is locally convex at
$x$ or antisymmetry holds at $x$) and $x$ admits a
 $c$-compact neighborhood, then $x$ admits a base of
$c$-compact neighborhoods and, moreover, $E$ is weakly convex at
$x$.
\end{proposition}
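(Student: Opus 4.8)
The plan is to reduce the statement to a situation where I can apply the earlier machinery (especially Lemma \ref{prj} and Proposition \ref{nic}). I want to produce, from a single $c$-compact neighborhood of $x$, a whole base of such neighborhoods and also conclude weak convexity.

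Let me think about the hypotheses. We have a $T_2$-preordered space. The condition "the topology does not distinguish the points of $[x]$" is the key abstraction — it means any open set containing $x$ contains all of $[x]$. Let me verify the parenthetical claims: if $E$ is locally convex at $x$, then by Lemma \ref{gvh} every open neighborhood of $x$ is an open neighborhood of $[x]$, so indeed no open set separates $x$ from other points of $[x]$. If antisymmetry holds at $x$, then $[x] = \{x\}$ trivially, so again the topology cannot distinguish points of $[x]$. Good.

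Now suppose $N_0$ is a $c$-compact neighborhood of $x$. Write $N_0 = D(N_0) \cap I(N_0)$, a compact convex set, and its interior contains $x$. Given any open neighborhood $O$ of $x$, I want to find a $c$-compact neighborhood of $x$ inside $O$. The natural move: since the topology does not distinguish points of $[x]$, every open set containing $x$ contains $[x]$; in particular $[x] \subset O \cap \operatorname{Int} N_0$. Set $S = N_0$, a compact set, and let $O' = O \cap \operatorname{Int} N_0$, which is open in $E$. Then $[x] \subset O' \subset N_0$, and $N_0$ is convex. This is almost the setup of Lemma \ref{prj}, whose hypotheses are a compact set $S$, an open set $O'$, a convex set $C$, and a point $x$ with $[x]_S \subset O' \subset C \subset S$. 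Here I can take $C = S = N_0$.

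So the main step is to check that $[x]_S \subset O'$ where $[x]_S = d_S(x) \cap i_S(x)$ computed in the subspace $S = N_0$. Since $[x]_S \subset [x]$ always (the subspace hulls are smaller), and $[x] \subset O'$ as argued, this holds. Then Lemma \ref{prj} directly yields both a $c$-compact neighborhood of $[x]$ contained in $O'$ (hence in $O$) and an open convex neighborhood of $[x]$ contained in $O'$. Since $x \in [x]$, these are neighborhoods of $x$ as well. As $O$ was arbitrary, $x$ admits a base of $c$-compact neighborhoods, and the open convex neighborhoods form a base too, which is exactly weak convexity at $x$. The main obstacle is purely bookkeeping: verifying that "the topology does not distinguish points of $[x]$" really delivers $[x] \subset O'$ and hence lets me invoke Lemma \ref{prj}; once that inclusion is in hand the conclusion is immediate from the earlier lemma, so there is essentially no hard analytic content beyond correctly matching the hypotheses of Lemma \ref{prj}.
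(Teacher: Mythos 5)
Your proof is correct and follows essentially the same route as the paper: both take the given $c$-compact neighborhood as the compact set $S$ and the convex set $C$ in Lemma \ref{prj}, shrink the given open neighborhood $O$ to $O\cap \textrm{Int}\,N_0$, and use the hypothesis that the topology does not distinguish the points of $[x]$ (via Lemma \ref{gvh} in the locally convex case, trivially under antisymmetry) to get $[x]\subset O\cap\textrm{Int}\,N_0$ so that the lemma applies. The paper's proof is just a more compressed version of this same bookkeeping.
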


\begin{proof}
Let $K$ be a $c$-compact neighborhood of $x$ and hence $[x]$, and
let $O\ni x$ be an open neighborhood of $x$ and hence $[x]$ which we
can assume contained in $K$. We have to show that there is a compact
$c$-set neighborhood $K'$ of $[x]$ such that $K'\subset O$, and
analogously in the convex open neighborhood case. It suffices to
apply lemma \ref{prj} with $S:=K$, $C:=K$. Observe that by local
convexity (lemma \ref{gvh}) or antisymmetry at $x$, if $O$ is any
open neighborhood of $x$ we have $[x]\subset O$.
\end{proof}

Clearly a compact $T_2$-ordered space is $k$-preserving (Prop.\
\ref{pr1}).
%
%
We know that the compact $T_2$-ordered spaces are convex
\cite{nachbin65}. We have the following interesting generalization
%
%
%

\begin{theorem} \label{bdd}
Every $T_2$-preordered  $k$-preserving $k_\omega$-space is convex at
every point $x$ such that (i) the topology does not distinguish
different points of [x], (ii) local compactness holds at $x$ (e.g.\
wherever it is locally compact and antisymmetric).

In particular, every $k$-preserving $T_2$-ordered locally compact
$\sigma$-compact space is convex (and hence quasi-uniformizable).
\end{theorem}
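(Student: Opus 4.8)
The plan is to reduce everything to the machinery already developed for $k_\omega$-spaces, in particular to Remark \ref{vux}, which says that in a $T_2$-preordered $k_\omega$-space weak convexity at $x$ already implies convexity at $x$. So it suffices to establish weak convexity at every point $x$ satisfying (i) and (ii), and the natural route to weak convexity is Proposition \ref{nid}. That proposition has exactly two hypotheses: the non-distinguishing condition on the points of $[x]$, which is precisely hypothesis (i), and the existence of a $c$-compact neighborhood of $x$. Thus the whole argument hinges on producing such a neighborhood.

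First I would extract a $c$-compact neighborhood of $x$ from local compactness together with the $k$-preserving property. By (ii) there is a compact neighborhood $K$ of $x$, so $x\in \mathrm{Int}\,K\subset K$. The $k$-preserving assumption makes the convex hull $d(K)\cap i(K)$ compact. Applying Proposition \ref{pr1} to the compact set $K$ in the $T_2$-preordered space $E$, we get $d(K)=D(K)$ and $i(K)=I(K)$; hence $d(K)\cap i(K)=D(K)\cap I(K)$ is the intersection of a closed decreasing set and a closed increasing set, i.e.\ a $c$-set. Since it contains $\mathrm{Int}\,K\ni x$ it is a neighborhood of $x$, and being compact it is exactly the $c$-compact neighborhood required.

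With this neighborhood in hand, Proposition \ref{nid} (its non-distinguishing hypothesis being granted by (i)) yields weak convexity at $x$, and Remark \ref{vux} upgrades this to convexity at $x$. This proves the first assertion. For the ``in particular'' clause, a locally compact $\sigma$-compact space is a $k_\omega$-space, and antisymmetry forces $[x]=\{x\}$ at every point, so condition (i) is vacuously satisfied while (ii) holds everywhere by local compactness. Hence the space is convex at every point, i.e.\ convex; being in particular locally convex, it falls under Corollary \ref{bak}, which gives quasi-uniformizability.

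I expect the only genuine content to be the extraction of the $c$-compact neighborhood in the second paragraph, where the compactness supplied by the $k$-preserving hypothesis must be matched with the closed-hull identities of Proposition \ref{pr1} to produce the $c$-set structure; the remaining steps are bookkeeping invocations of the earlier results. The subtlety to watch is that Proposition \ref{nid} demands only a $c$-\emph{compact} neighborhood rather than an already convex one, so that the $k$-preserving hypothesis alone is enough to feed it.
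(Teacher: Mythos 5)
Your proposal is correct and follows essentially the same route as the paper's own proof: a compact neighborhood $K$ obtained from local compactness yields the $c$-compact neighborhood $d(K)\cap i(K)$ via the $k$-preserving hypothesis together with Prop.~\ref{pr1}, after which Prop.~\ref{nid} (using hypothesis (i)) gives weak convexity at $x$ and Remark~\ref{vux} upgrades it to convexity, with the ``in particular'' clause handled by the same reductions. The only cosmetic difference is that the paper takes $K$ to be a compact neighborhood of $[x]$ rather than of $x$, which is immaterial since Prop.~\ref{nid} only requires a $c$-compact neighborhood of $x$ itself.
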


\begin{proof}
Every $T_2$-preordered $k_\omega$-space  is normally preordered
\cite{minguzzi11f}. By assumption there is a compact neighborhood
$K$ of $[x]$. The set $d(K)\cap i(K)$ is a $c$-compact neighborhood
of $x$. By Prop.\ \ref{nid}, weak convexity holds at $x$, and by
remark \ref{vux} convexity holds at $x$.
\end{proof}

\begin{remark}
Actually the $k$-preserving property could be dropped provided we
replace (ii) with the requirement that the point $x$ has a c-compact
neighborhood, or that local compactness holds at $x$ and the
$k$-preserving property holds {\em locally}.
\end{remark}

\section{Compactly generated $T_2$-preorders}

In this section we study sufficient conditions for local convexity.
The main idea is to consider preorders which, intuitively, are
generated by relations which are limited, in the sense that do not
connect arbitrarily 'far away' points (compactness is used to give a
rigorous meaning to this concept). Thus we shall be basically
concerned with topological preordered spaces for which both topology
and preorder are generated from local information.

For this type of preorder and for a locally compact space, given two
related `far away' points $p,q,$ there is some point $r$, $p\le r
\le q$, at `reasonable distance' but not too close to the original
point $p$. From that it is possible to show that if local convexity
is violated at $p$ then, by a limiting argument, some point $r'\ne
p$ exists such that $p\le r'\le p$ and hence antisymmetry is
violated at $p$. This strategy has been used in mathematical
relativity theory to prove that the $K^+$ relation (the smallest
closed preorder containing the causal relation $J^+$) is locally
convex \cite[Lemma 16]{sorkin96} \cite[Lemma 5.5]{minguzzi07}.

\begin{definition} \label{jdp}
A  $T_2$-preordered space $(E,\mathscr{T},\le)$ is a
$k$-$T_2$-preordered space (read `compactly generated
$T_2$-preordered space') if there is a relation $R\subset G(\le)$
such that\footnote{Compare with the definition of +proper relation
$R$,
 and relation $\mathcal{G}R$ given in \cite{akin10b}.}
\begin{itemize}
\item[(i)] for every  compact set $K$ the set $\overline{R(K)}$ is compact,
\item[(ii)] the preorder $\le$ is the smallest closed preorder
containing $R$.
\end{itemize}
We shall also say that $\le$ is a compactly generated preorder.
\end{definition}

Note that in (ii) the smallest closed preorder exists because the
family of closed preorders containing $R$ is non-empty as $E\times
E$ is a closed preorder which contains $R$. Note that if $R$
satisfies (i)-(ii) then also $\Delta\cup R$ satisfies them, thus $R$
can be chosen reflexive.

\begin{remark} \label{nsg}
For applications in which $E$ is locally compact it is useful to
observe that the condition
\begin{itemize}
\item[(i')] every point $x\in E$ admits a closed and compact neighborhood $F(x)$ such
that $\overline{R(F)}$ is compact,
\end{itemize}
implies (i), and thus a space $E$ satisfying (i') and (ii) is
compactly generated. Note that if $R$ satisfies (i')-(ii) then also
$\Delta\cup R$ satisfies them, thus $R$ can be chosen reflexive.
\end{remark}

%
%


\begin{proposition} \label{nco}
If $(E,\mathscr{T},\le)$ is a  $T_2$-preordered compact space, then
$\le$ is compactly generated.
\end{proposition}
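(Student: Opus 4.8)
The plan is to exhibit the generating relation explicitly by taking $R$ to be the preorder itself, $R = G(\le)$, and then to check the two conditions of Definition \ref{jdp}. The key observation is that compactness trivializes condition (i) while the $T_2$-preordered hypothesis trivializes condition (ii), so essentially no work is required beyond unwinding the definitions.

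First I would observe that condition (i) holds for \emph{any} relation $R\subset E\times E$ once $E$ is compact. Indeed, for any subset $K\subset E$ the set $\overline{R(K)}$ is a closed subset of the compact space $E$ and is therefore itself compact. Applying this with $R=G(\le)$, as $K$ ranges over the compact subsets of $E$, gives (i) at once.

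Next I would verify condition (ii) for this choice of $R$. Since $E$ is $T_2$-preordered, the graph $G(\le)$ is closed by definition, and $\le$ is a preorder by hypothesis; hence $\le$ is already a closed preorder whose graph contains $R=G(\le)$. Conversely, any closed preorder containing $R=G(\le)$ must contain all of $G(\le)$ and therefore contains $\le$. Thus $\le$ is the smallest closed preorder containing $R$, which is precisely (ii).

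I do not expect any genuine obstacle: the statement is a formal consequence of the fact that closed subsets of a compact space are compact, together with the closedness of the preorder built into the $T_2$-preordered assumption. The only conceptual point worth flagging is that one need not search for a nontrivial, genuinely "small" generating relation $R$; the preorder can serve as its own generator precisely because its graph is already closed, so that taking the smallest closed transitive reflexive relation containing it returns $\le$ itself.
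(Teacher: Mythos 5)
Your proof is correct and takes exactly the same route as the paper: the paper's own proof consists of the single observation that the conditions of Definition \ref{jdp} are satisfied by taking $R=G(\le)$. Your write-up merely spells out the two verifications (closed subsets of a compact space are compact, and a closed preorder is the smallest closed preorder containing its own graph), which the paper leaves implicit.
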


\begin{proof}
The conditions in the definition of compactly generated preorder are
satisfied taking $R=G(\le)$.
\end{proof}

The next result is worth mentioning although we shall not use it.

\begin{theorem}
Let $(E,\mathscr{T},\le)$ be a $k$-$T_2$-preordered space, and let
$R$ be a reflexive relation as in  definition \ref{jdp}. The set of
continuous isotone functions for $R$ coincides with the set of
continuous isotone functions for $\le$.

\end{theorem}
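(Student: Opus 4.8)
The plan is to establish the two inclusions between the two sets of continuous isotone functions separately; the entire content lies in one direction. Since $R\subset G(\le)$ by hypothesis, the inclusion ``isotone for $\le$ $\Rightarrow$ isotone for $R$'' is immediate: if a function $f\colon E\to\mathbb{R}$ satisfies $x\le y\Rightarrow f(x)\le f(y)$, then in particular $(x,y)\in R$ implies $x\le y$ and hence $f(x)\le f(y)$. Thus every continuous isotone function for $\le$ is a continuous isotone function for $R$, a step which uses neither continuity nor the compactness condition (i) of definition \ref{jdp}.

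For the converse I would take a continuous function $f\colon E\to\mathbb{R}$ that is isotone for $R$ and associate to it the relation $R_f:=\{(x,y)\in E\times E: f(x)\le f(y)\}$. The key observation is that $R_f$ is automatically a \emph{closed preorder}. It is reflexive and transitive because the usual order on $\mathbb{R}$ is, and it is closed because $R_f=(f\times f)^{-1}(P)$, where $P=\{(a,b)\in\mathbb{R}^2: a\le b\}$ is closed in $\mathbb{R}^2$ and $f\times f\colon E\times E\to\mathbb{R}^2$ is continuous, so $R_f$ is the preimage of a closed set under a continuous map. Moreover, the isotonicity of $f$ for $R$ says exactly that $R\subset R_f$.

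At this point I would invoke condition (ii) of definition \ref{jdp}: $\le$ is the smallest closed preorder containing $R$. Since $R_f$ is one such closed preorder, minimality yields $G(\le)\subset R_f$, that is $x\le y\Rightarrow f(x)\le f(y)$, so $f$ is isotone for $\le$. Combining the two inclusions gives the asserted coincidence. The only point worth emphasizing is that a single continuous real-valued function isotone for $R$ already ``closes up'' $R$ into the closed preorder $R_f$, after which the minimality in (ii) does all the remaining work; consequently there is no genuine obstacle here, and in particular the compactness hypothesis (i) plays no role in this statement.
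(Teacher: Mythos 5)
Your proposal is correct and follows essentially the same route as the paper: the easy direction from $R\subset G(\le)$, and for the converse the relation $R_f=\{(w,z):f(w)\le f(z)\}$, which is a closed preorder containing $R$, so minimality in (ii) gives $G(\le)\subset R_f$. You merely spell out the closedness of $R_f$ (as the preimage of a closed set under $f\times f$) and the irrelevance of condition (i), both of which the paper leaves implicit.
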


\begin{proof}
If $f$ is a continuous isotone function for $\le$ and $(x,y)\in R$
we have, since $R\subset G(\le)$, $(x,y)\in G(\le) \Rightarrow
f(x)\le f(y)$ thus $f$ is a continuous isotone function for $R$. If
$f$ is a continuous isotone function for $R$ the relation
$R_f=\{(w,z): f(w)\le f(z)\}$ is a closed preorder  containing $R$
thus $G(\le)\subset R_f$ which implies $x\le y\Rightarrow f(x)\le
f(y)$, that is, $f$ is a continuous isotone function for $\le$.
\end{proof}

This result is interesting because in those cases in which $E$ is
also normally preordered (the $k_\omega$-space condition suffices
\cite{minguzzi11f}) this set of continuous isotone functions for $R$
allows us to recover $\le$, that is, $x\le y$ iff for all continuous
isotone functions $f: E\to [0,1]$, we have $f(x)\le f(y)$.

\begin{remark} \label{hnx}
It is worth to mention a recent work by Akin and Auslander on
recurrence problems and compactifications in dynamical systems
\cite{akin10b}. This section is very much related with their work,
although we followed a different line of reasoning inspired by
results in topological preordered spaces and relativity theory. In
their paper they assume that $E$ is a separable locally compact
metric space \cite[p.\ 50]{akin10b}, while in our work second
countability and Hausdorffness are not assumed, and local
compactness is used only where it is strictly needed. We do not use
compactification arguments as in their article.


We usually work with  a reflexive relation $R$  because this is the
interesting case from the topological point of view, as the elements
of a quasi-uniformity contain the diagonal. Furthermore, the
application to cone structures  seems to require a reflexive $R$.
Observe that if $R$ is reflexive then the generalized recurrent set
mentioned in \cite[Theor.\ 11]{akin10b} is the whole space. Our
theorem \ref{bdr} will be similar but stronger than  their
\cite[Theor.\ 14]{akin10b}.

We find that our terminology concerning {\em compactly  generated}
$T_2$-preordered spaces is more appropriate, since relations do
generalize functions but the term {\em proper} is used for maps such
that the inverse images of compact subsets are compact, while we do
not take any inverse here. Maps which send compact set to compact
sets are sometimes called {\em compact}. Finally, observe that our
terminology places the accent on $\le$ rather than $R$. In
applications there is often a natural choice for $R$ but,
mathematically, it could be chosen with some freedom.

\end{remark}

\subsection{Some examples of compactly generated preorders}
\label{jej}

 Most closed preorders appearing in applications are
compactly generated. We give some examples proving conditions
(i)-(ii) or (i')-(ii) of remark \ref{nsg}.

\begin{example} \label{bws}
Let us recall that in a spacetime $(M,g)$ (see Sect.\ \ref{nod}) the
relation $K^+$ is by definition the smallest closed and transitive
relation containing $J^+$. Let $F_\alpha$ be a locally finite closed
and compact covering of $M$ (it exist because of local compactness
 and \cite[Theor.\ 20.7]{willard70}) and let $R=\cup_{\alpha} J^+\cap (F_\alpha \times F_\alpha)$.
Since each $F_\alpha$ is intersected only by a finite number of
$F_\beta$,  $\overline{R(F_\alpha)}$ is compact. Thus if $C$ is a
compact set, $\overline{R(C)}$ is compact.

Clearly, $J^+$ is the smallest transitive relation containing $R$,
thus $K^+$ is the smallest closed and transitive relation containing
$R$. We conclude that $K^+$ is a $k$-$T_2$-preorder for which $R$ is
a generating relation. As mentioned in Sect.\ \ref{nod}, it
coincides with the causal relation in causally simple spacetimes and with its closure in stably causal spacetimes.

\end{example}

\begin{example}\label{crg}
Let $E$ be a Hausdorff, connected, paracompact ($C^{r+1}$, $r\ge 0$)
manifold and let $v: E\to TE$ be a ($C^r$, $r\ge 0$) vector field.
We write $(x,y)\in J$ if there is some integral curve of $v$ which
connects $x$ to $y$ in the forward direction. Let $F_\alpha$ be a
locally finite closed and compact covering of $E$, and let
$R=\cup_{\alpha} J\cap (F_\alpha \times F_\alpha)$. Clearly, $J$ is
the smallest transitive relation containing $R$. One is interested
in the smallest closed and transitive relation containing $J$,
denoted $\mathcal{G}J$ by some authors \cite{akin10b}, which is
therefore the smallest closed and transitive relation containing
$R$. Arguing as in example \ref{bws}, we obtain that $\mathcal{G}J$
is a $k$-$T_2$-preorder for which $R$ is a generating relation.
\end{example}

\begin{example}
Let $E$ be a Hausdorff, connected, compact manifold and let $f: E\to
E$ be a continuous map. We write $(x,y)\in J$ if there is some
integer $k\ge 0$ such that $y=f^k(x)$. Let $R=\{(x,f(x)), x\in E\}$.
One is interested in the smallest closed and transitive relation
containing $R$, which is clearly a $k$-$T_2$-preorder for which $R$
is a generating relation.
\end{example}

%
%
%
%
%

\subsection{Antisymmetry and local convexity}

The next two proofs generalize to the topological preordered case,
ideas contained in \cite[Lemma 5.3, 5.5]{minguzzi07}
\cite{sorkin96}.

\begin{proposition} \label{jhg}
Let $(E,\mathscr{T},\le)$ be a  $k$-$T_2$-preordered space, let $R$
be a reflexive generating relation as in definition \ref{jdp} and let $K$
be a compact set. If $x \le z$ with $x\in \textrm{Int}(K)$ and
$z\notin \overline{R(K)}$, then there is $y\in
\overline{R(K)}\backslash \textrm{Int}(K)$ such that $x \le y \le
z$.
\end{proposition}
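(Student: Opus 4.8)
The plan is to exploit the minimality built into part (ii) of Definition~\ref{jdp}: $\le$ is the \emph{smallest} closed preorder containing $R$. Write $C=\overline{R(K)}$, which is compact by (i) and closed as a closure, so $E\setminus C$ is open. Since $R$ is reflexive, $\Delta\subset R$ gives $K\subset R(K)\subset C$; in particular $x\in \textrm{Int}(K)\subset C$ while $z\notin C$. I would then introduce the auxiliary relation
\[
G^{*}=\{(a,b)\in G(\le): a\notin\textrm{Int}(K)\ \text{ or }\ b\in C\ \text{ or }\ i(a)\cap d(b)\cap(C\setminus\textrm{Int}(K))\neq\emptyset\},
\]
where $i(a)\cap d(b)=\{y:a\le y\le b\}$ records the sought intermediate point. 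The whole argument reduces to showing that $G^{*}$ is a closed preorder containing $R$; minimality then forces $G(\le)\subset G^{*}$, hence $G^{*}=G(\le)$. Applying the defining condition of $G^{*}$ to the pair $(x,z)\in G(\le)=G^{*}$, which has $x\in\textrm{Int}(K)$ and $z\notin C$, the third alternative must hold, delivering precisely the desired $y\in C\setminus\textrm{Int}(K)$ with $x\le y\le z$.

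The containment $R\subset G^{*}$ and the preorder axioms are direct bookkeeping. For $(a,b)\in R$: either $a\notin\textrm{Int}(K)$, or $a\in\textrm{Int}(K)\subset K$ and then $b\in R(K)\subset C$; reflexivity of $G^{*}$ is the same dichotomy applied to $(a,a)$. Transitivity I would verify by a short case analysis on a chain $a\le b\le c$ with both pairs in $G^{*}$, the only nontrivial case being $a\in\textrm{Int}(K)$ and $c\notin C$: if $b\notin C$, the intermediate point for $(a,b)$ already lies below $c$; if $b\in C\setminus\textrm{Int}(K)$, then $b$ itself serves; and if $b\in\textrm{Int}(K)$, then the intermediate point for $(b,c)$ lies above $a$ (these three subcases are exhaustive since $\textrm{Int}(K)\subset C$).

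The main obstacle is closedness of $G^{*}$, and this is exactly where compactness of $C$ is indispensable. I would take a net $(a_\lambda,b_\lambda)\to(a,b)$ in $G^{*}$; since $G(\le)$ is closed, $a\le b$. Arguing by contradiction, suppose $(a,b)\notin G^{*}$, i.e.\ $a\in\textrm{Int}(K)$, $b\notin C$, and no intermediate point exists. As $\textrm{Int}(K)$ and $E\setminus C$ are open, eventually $a_\lambda\in\textrm{Int}(K)$ and $b_\lambda\notin C$, so membership in $G^{*}$ forces intermediate points $y_\lambda\in C\setminus\textrm{Int}(K)$ with $a_\lambda\le y_\lambda\le b_\lambda$. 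Compactness of $C$ yields a subnet $y_{\lambda'}\to y\in C$; since $E\setminus\textrm{Int}(K)$ is closed, $y\in C\setminus\textrm{Int}(K)$, and closedness of $G(\le)$ applied to $(a_{\lambda'},y_{\lambda'})\to(a,y)$ and $(y_{\lambda'},b_{\lambda'})\to(y,b)$ gives $a\le y\le b$ — an intermediate point, contradicting the assumption. Hence $G^{*}$ is closed, which completes the argument. The use of nets (rather than sequences) keeps the compactness extraction valid without assuming second countability or metrizability, in line with the stated generality of the hypotheses.
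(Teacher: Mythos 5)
Your proof is correct and follows essentially the same route as the paper: your relation $G^{*}$ is exactly the paper's auxiliary relation $B$ (the paper writes the membership condition as an implication, you as the equivalent disjunction), and both arguments verify that it is a reflexive, transitive, closed relation sandwiched between $R$ and $G(\le)$, then invoke minimality. The only cosmetic difference is that you establish closedness via nets and convergent subnets where the paper uses a filter base with the finite intersection property and a cluster point in the compact set $\overline{R(K)}\setminus\textrm{Int}(K)$ — these are interchangeable formulations of the same compactness argument.
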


\begin{proof}
Let us consider the relation, where $O=\textrm{Int}(K)$,
\begin{align*}
B=\{(x,z)\in G(\le):& \ \textrm{if} \ ``x\in O \textrm{ and }
z\notin \overline{R(K)} \, \textrm{''} \textrm{ then }
``\textrm{there is } y\in \overline{R(K)}\backslash O \\ & \textrm{
such that } x \le y \le z \textrm{''} \}.
\end{align*}
Suppose we prove that it is closed, reflexive, transitive and that
$R\subset B \subset G(\le)$. From the minimality property in the
definition of $\le$, we have $G(\le)\subset B$, thus $B=G$ which is
the thesis.

The inclusion $B\subset G$ is trivial, let us prove $R\subset B$. If
$(x,z)\in R$ then, by the definition of $\le$, $(x,z)\in G$. In the
definition of $B$ the hypothesis $``x\in O \textrm{ and } z\notin
\overline{R(K)} \, \textrm{''}$ is necessarily false because if
$x\in O$ then $z\in R(O)\subset R(K)\subset \overline{R(K)}$. As the
hypothesis is false the implication in the definition of $B$ is
true, thus $(x,z)\in B$ which proves $R\subset B$. Since $R$ is reflexive $B$ is reflexive.

%
%

Let us prove closure. Let $(x,z)\in \overline{B}$. If $x\notin O$ or
$z\in \overline{R(K)}$ then $(x,z) \in B$ because the hypothesis
``if $x\in O$ \textrm{ and } $z\notin \overline{R(K)}$'' in the
definition of $B$ is false and so the implication in the definition
of $B$ is true. Thus we can consider the case $x\in O$ and $z\notin
\overline{R(K)}$. Let $O_x \ni x, O_z\ni z$ be any open sets with
$O_x\subset O$, $O_z\subset E\backslash \overline{R(K)}$. By
assumption we can find $x'\in O_x$, $z'\in O_z$, $(x',z')\in B$.
Since $x' \in O$ and $z'\notin \overline{R(K)}$ there is $y'\in
\overline{R(K)}\backslash O$ such that $x'\le y' \le z'$. This means
that $i(O_x)\cap d(O_z)\cap \overline{R(K)}\backslash O$, with $O_x$
and $O_z$ running on the open neighborhoods of $x$ and $z$, gives a
family of non-empty sets with the finite intersection property (in
fact they are a base for a filter). As $\overline{R(K)}\backslash O$
is compact the associated filter admits a cluster point $y\in
\overline{R(K)}\backslash O$, i.e.\ every neighborhood of $y$
intersects $i(O_x)\cap d(O_z)$ for every open sets $O_x\ni x$,
$O_z\ni z$. But if it were $x\nleq y$ then, by the closure of $G$
\cite[Prop.\ 1]{nachbin65}, there would be a neighborhood of $O_x$
of $x$ and $O_y$ of $y$ such that $i(O_x)\cap d(O_y)=\emptyset$,
since it does not hold we infer
 $x \le y$ and analogously $y \le z$. Thus $(x,z)\in B$.




Let us prove transitivity. Let $(x,w)\in B$ and $(w,z)\in B$. If
``$x\in O$ and $z\notin \overline{R(K)}$'' is false there is nothing
to prove because, as the hypothesis in the implication defining $B$
is false, $(x,z)\in B$. If `$x\in O$ and $z\notin \overline{R(K)}$''
is true and $w \in \overline{R(K)}\backslash O$ we have gain
$(x,z)\in B$ (set $y=w$), thus let us assume $w\notin
\overline{R(K)}\backslash O$ so that it either belongs to $O$ or to
$E\backslash \overline{R(K)}$. In the former case since $(w,z)\in B$
we infer that there is $y\in \overline{R(K)}\backslash O$ such that
$x \le w\le y\le z$. In the latter case since $(x,w)\in B$ we infer
that there is $y\in \overline{R(K)}\backslash O$ such that $x \le
y\le w\le z$. We conclude $(x,z)\in B$.
\end{proof}

\begin{theorem} \label{bwx}
Let $(E,\mathscr{T},\le)$ be a  $k$-$T_2$-preordered space. Let
$x\in \textrm{Int} K$ where $K$ is a compact set. There is an open
convex neighborhood $C$ of $[x]$  such that $C\subset \textrm{Int}
K$ or there is a point $p\notin \textrm{Int} K$ such that $x\le p\le
x$.
\end{theorem}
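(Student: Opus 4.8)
=== PROOF PROPOSAL ===

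The plan is to prove a dichotomy: either a small convex neighborhood of $[x]$ sits inside $\textrm{Int}\,K$, or antisymmetry must fail at $x$ via a point outside $\textrm{Int}\,K$. The natural strategy is to assume the first alternative fails and extract the offending point $p$ by a compactness/limiting argument, using Proposition \ref{jhg} as the engine. First I would fix a reflexive generating relation $R$ as in Definition \ref{jdp}, set $O=\textrm{Int}\,K$, and recall that by condition (i) the set $\overline{R(K)}$ is compact. Since $E$ is a $k$-$T_2$-preordered space it is in particular a $T_2$-preordered space, so $[x]$ is closed and, being contained in $O$, we may attempt to separate it from the complement.

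The heart of the argument is to suppose that \emph{no} open convex neighborhood of $[x]$ is contained in $O$, and derive the existence of $p\notin O$ with $x\le p\le x$. Here I would consider, for each convex open neighborhood $C$ of $[x]$ (or more concretely for a shrinking family of candidate neighborhoods), the failure witnesses: points of $C$ lying outside $O$. Because convexity forces such a witness $w$ to satisfy $w\in d(C)\cap i(C)$, there exist $a,b\in C$ with $a\le w\le b$, and one can push $a,b$ arbitrarily close to $[x]$. The idea is to feed these into Proposition \ref{jhg}: whenever we have $x\le z$ with $z\notin\overline{R(K)}$, that proposition produces an intermediate $y\in\overline{R(K)}\setminus O$ with $x\le y\le z$. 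By collecting these intermediate points over a filter base of shrinking neighborhoods, I would use the compactness of $\overline{R(K)}\setminus O$ to extract a cluster point $p$. Closedness of $G(\le)$ (Nachbin \cite[Prop.\ 1]{nachbin65}) then upgrades the limiting relations $x\le y$ and $y\le x$ (or the two comparisons that survive the limit) to $x\le p$ and $p\le x$, giving the antisymmetry-violating point $p\notin O$.

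The main obstacle I anticipate is organizing the limiting argument cleanly so that both inequalities $x\le p$ and $p\le x$ survive to the cluster point simultaneously. Proposition \ref{jhg} only delivers a one-sided intermediate point relative to a single comparison $x\le z$; to force $p\le x$ as well I expect to exploit that the failure of convexity produces witnesses on \emph{both} sides of $[x]$ — a point of the convex neighborhood that is above some element near $[x]$ and below another — so that after passing to the limit one obtains a loop through $p$. Managing the bookkeeping of which neighborhoods $O_x,O_z$ index the filter, and verifying the finite intersection property for the family $i(O_x)\cap d(O_z)\cap(\overline{R(K)}\setminus O)$, will require the same care exhibited in the proof of Proposition \ref{jhg}, and I would mirror that argument closely.

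Finally, I would dispose of the degenerate cases. If $[x]$ already fails to be contained in $O$ the statement is trivial since then $[x]$ supplies a point outside $\textrm{Int}\,K$ related to $x$ in both directions. Otherwise $[x]\subset O$ and the dichotomy above applies; in the benign branch the convex neighborhood $C$ furnished by the separation is exactly what is claimed, so the proof closes.
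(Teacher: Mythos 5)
Your overall machinery --- fix a reflexive generating relation $R$, set $O=\textrm{Int}\,K$, use Proposition \ref{jhg} to push escaping witnesses back into the compact shell $\overline{R(K)}\setminus O$, build a filter indexed by shrinking neighborhoods of $[x]$, extract a cluster point $p$ by compactness, and invoke closedness of $G(\le)$ together with compactness of $[x]$ to get $x\le p\le x$ --- is the same as the paper's, and your observation that both inequalities survive the limit because the witnesses sit in the two-sided hulls $d(M)\cap i(M)$ is correct. (A small slip: Proposition \ref{jhg} must be applied to $w\le z$ with $w\in M$, not to $x\le z$, since a hull witness $z$ satisfies $w\le z\le q$ for $w,q\in M$ but not necessarily $x\le z$; also, witnesses already lying in $\overline{R(K)}\setminus O$ need no intermediation at all.) The genuine gap is elsewhere: nothing in your proposal ever \emph{produces} an open convex set, yet the first alternative of the theorem demands an \emph{open} convex neighborhood. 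You run your dichotomy over open convex neighborhoods of $[x]$, whose existence is exactly what is in question; but the sets your shrinking argument actually controls are the hulls $d(M)\cap i(M)$ of small open neighborhoods $M$, and these are convex without being open --- in a preordered space one cannot simply pass to interiors, since the interior of a convex set need not be convex (this is why the paper must work, in Prop.\ \ref{nic}, to pass from local convexity to weak convexity). Consequently your contradiction hypothesis ``no open convex neighborhood of $[x]$ lies in $O$'' is compatible with some hull $d(Y)\cap i(Y)$ being contained in $O$: in that event your bad branch has no witness for $Y$, and your benign branch hands you only a non-open convex neighborhood, not the open one the theorem asserts.

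The paper closes exactly this hole with Lemma \ref{prj} (which rests on Lemma \ref{pri}, i.e.\ preorder normality of the compact $T_2$-preordered subspace $K$): it states the dichotomy on the hulls themselves, and when some hull $C'=d(Y')\cap i(Y')$ is contained in $O$, Lemma \ref{prj} with $S=K$ converts the chain $[x]\subset O'\subset C'\subset K$ into an \emph{open} convex neighborhood of $[x]$ inside $O$. This separation step is indispensable and is entirely absent from your proposal; indeed your phrase ``the convex neighborhood $C$ furnished by the separation'' appeals to a separation you never set up. To repair the argument, state the dichotomy as: either some hull of a neighborhood of $[x]$ lies in $O$ (then invoke Lemma \ref{prj} to get the open convex neighborhood), or every such hull meets $E\setminus O$ (then your filter argument, with Proposition \ref{jhg} handling the witnesses that escape $\overline{R(K)}$, produces the point $p$).
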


\begin{proof}

The set $[x]$ is closed and we can assume that it is
contained in $\textrm{Int}(K)$, otherwise we have finished.


Let $R$ be a reflexive relation as in the definition \ref{jdp} and
let us set $O=\textrm{Int}(K)$. Either (i) there is a neighborhood
$N$ of $[x]$ such that $d(N)\cap i(N)$ is contained in the  compact
set $\overline{R(K)}$ or (ii) for every neighborhood $M$ of $[x]$,
$(d(M)\cap i(M))\backslash \overline{R(K)}\ne \emptyset$.

In  case (ii), if $z \in (d(M)\cap i(M))\backslash \overline{R(K)}$
with $M\subset O$  neighborhood of $[x]$, then by Prop.\ \ref{jhg}
since $z \notin \overline{R(K)}$ and $w\le z$  for some $w\in
M\subset O$, we have that there is $y \in \overline{R(K)}\backslash
O$ such that $w \le y \le z \le q$ for some $q\in M$. Thus for every
neighborhood $M\subset O$ of $[x]$, the set $(d(M)\cap i(M))\cap
(\overline{R(K)}\backslash O)$ is non-empty. Observe that varying
$M$ we obtain a family of sets which satisfies the finite intersection
property. As $\overline{R(K)}\backslash O$ is compact the associated
filter admits a cluster point $p\in \overline{R(K)}\backslash O$,
i.e.\ every neighborhood of $p$ intersects $d(M)\cap i(M)$ for every
neighborhood $M$ of $[x]$. But if it were $x\nleq p$ then by the
closure of $G(\le)$ and the compactness of $[x]$ there would be a
neighborhood  $O_x$ of $[x]$ and $O_p$ of $p$ such that $i(O_x)\cap
O_p=\emptyset$, since it does not hold we infer $x\le p$ and
analogously $p \le x$.

In case (i)  there is a neighborhood $N$ of $[x]$ such that
$d(N)\cap i(N)$ is contained in the  compact set $\overline{R(K)}$.
Assume that for every neighborhood $Y\subset N$ of $[x]$, $d(Y)\cap
i(Y)\cap [\overline{R(K)}\backslash O]\ne \emptyset$. Varying $Y$ we
get a family of non-empty subsets of the compact set
$\overline{R(K)}\backslash O$ which satisfies the finite intersection
property. There is a cluster point $p\in \overline{R(K)}\backslash
O$ thus, arguing as above, $x \le p \le x$.

If for some neighborhood $Y'\subset N$ of $[x]$, $d(Y')\cap
i(Y')\subset O$ we have that the set $C'= d(Y')\cap i(Y')$ is a
convex neighborhood of $[x]$ contained in $O$. Let $O'$ be an open
set such that $[x]\subset O' \subset C'\subset K$. Lemma \ref{prj} with $S=K$, implies the existence of an open convex
 neighborhood $C$ of $[x]$ contained in $O'$ and hence in $O$.
\end{proof}


\begin{theorem} \label{loi}
Every $k$-$T_2$-preordered space is weakly convex at every point $x$
such that (i) the topology does not distinguish different points of
[x], (ii) local compactness holds at $x$ (e.g.\ wherever it is
locally compact and antisymmetric).
\end{theorem}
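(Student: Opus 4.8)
The plan is to derive Theorem~\ref{loi} as a fairly direct consequence of Theorem~\ref{bwx}, combined with the characterization of weak convexity obtained earlier in Prop.~\ref{nic} (or equivalently Prop.~\ref{nid}). The goal is to show that under hypotheses (i)--(ii) the point $x$ admits a base of open convex neighborhoods. So let $N$ be an arbitrary open neighborhood of $x$; I must produce an open convex neighborhood of $x$ contained in $N$.

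First I would use local compactness at $x$: there is a compact neighborhood of $x$, so I can choose a compact set $K$ with $x\in\textrm{Int}(K)$ and, shrinking if necessary, arrange $\textrm{Int}(K)\subset N$. Here I would lean on Prop.~\ref{pbf}: under the hypothesis that the topology does not distinguish the points of $[x]$ (which plays the role antisymmetry played there), local compactness at $x$ upgrades to strong local compactness at $x$, so that $K$ can be taken inside any prescribed open neighborhood of $[x]$. Condition (i) also guarantees, via lemma~\ref{gvh} or the observation in Prop.~\ref{nid}, that every open neighborhood of $x$ is in fact an open neighborhood of the whole set $[x]$, which is what lets me feed $[x]\subset\textrm{Int}(K)$ into Theorem~\ref{bwx}.

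Next I would apply Theorem~\ref{bwx} with this $K$. It yields a dichotomy: either there is an open convex neighborhood $C$ of $[x]$ with $C\subset\textrm{Int}(K)\subset N$, which is exactly the neighborhood I want, or there is a point $p\notin\textrm{Int}(K)$ with $x\le p\le x$, i.e.\ $p\in[x]$. The second alternative must be excluded. This is where hypothesis (i) does the decisive work: if $p\in[x]$ but $p\notin\textrm{Int}(K)$, then $p$ and $x$ are distinguished by the topology (since $x\in\textrm{Int}(K)$ is an open set containing $x$ but not $p$), contradicting the assumption that the topology does not distinguish different points of $[x]$. Hence the first alternative always holds, and $C$ is the sought open convex neighborhood of $x$ inside $N$. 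Since $N$ was arbitrary, the open convex neighborhoods of $x$ form a base, which is precisely weak convexity at $x$. The parenthetical case ``locally compact and antisymmetric'' follows because antisymmetry at $x$ forces $[x]=\{x\}$, so the topology trivially does not distinguish the (single) point of $[x]$.

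The main obstacle, and the only genuinely delicate point, is ensuring that the compact set $K$ can be chosen simultaneously with $x\in\textrm{Int}(K)$ \emph{and} $\textrm{Int}(K)$ small enough to sit inside the given $N$; this is exactly the strengthening of local compactness that Prop.~\ref{pbf} provides under condition (i), so the difficulty is already absorbed by an earlier result. Everything else is a clean bookkeeping application of the dichotomy in Theorem~\ref{bwx} together with the exclusion of its degenerate branch via the non-distinguishing hypothesis. I expect the proof to be only a few lines once these references are invoked in the correct order.
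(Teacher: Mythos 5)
Your proposal is correct and follows essentially the same route as the paper's proof: use hypothesis (i) to promote neighborhoods of $x$ to neighborhoods of $[x]$, invoke Prop.~\ref{pbf} to get a compact neighborhood $K$ of $[x]$ inside the given open set, and then apply Theorem~\ref{bwx}, whose degenerate branch is impossible because any $p$ with $x\le p\le x$ lies in $[x]\subset\textrm{Int}(K)$. The paper leaves this exclusion implicit (since $K$ is a neighborhood of $[x]$, the second alternative is vacuous), whereas you spell it out; this is the only difference, and it is purely expository.
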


\begin{proof}
Let $O$ be an open neighborhood of $x$ and hence $[x]$. By local
compactness there is a compact neighborhood of $x$ and hence $[x]$.
By proposition \ref{pbf} there is a compact neighborhood $K$ of
$[x]$ contained in $O$. By theorem \ref{bwx} there is an open convex
neighborhood of $x$ contained in $K$ and hence $O$.
\end{proof}

\begin{corollary} \label{nja}
Every  $k$-$T_2$-ordered locally compact space is weakly convex.
\end{corollary}

\begin{theorem}
Every $k$-$T_2$-preordered $k_\omega$-space is convex at every point
$x$ such that (i) the topology does not distinguish different points
of [x], (ii) local compactness holds at $x$ (e.g.\ wherever it is
locally compact and antisymmetric).
\end{theorem}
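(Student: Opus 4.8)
The plan is to combine the weak-convexity statement just established with the ``$k_\omega$-upgrade'' that promotes weak convexity to full convexity, so that the argument is essentially a two-line assembly of earlier results. First I would observe that the hypotheses here coincide exactly with those of Theorem~\ref{loi}: the space is $k$-$T_2$-preordered, the topology does not distinguish the points of $[x]$, and local compactness holds at $x$. Since a $k$-$T_2$-preordered space is in particular $T_2$-preordered, Theorem~\ref{loi} applies verbatim and yields that $E$ is weakly convex at $x$.

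It then remains only to strengthen weak convexity at $x$ to convexity at $x$, and this is precisely the content of Remark~\ref{vux}, which records that a $T_2$-preordered $k_\omega$-space that is weakly convex at a point is in fact convex at that point. (The underlying mechanism is Lemma~\ref{pry}: an open convex neighborhood of $x$ is exhausted by the compacta $K_i$ of an admissible sequence and split, inductively over $i$, into an open decreasing piece $U$ and an open increasing piece $V$ whose intersection still contains $x$ and lies inside the given neighborhood.) Feeding the weak convexity obtained in the first step into Remark~\ref{vux} gives convexity at $x$, which is the assertion of the theorem. The parenthetical ``locally compact and antisymmetric'' case is subsumed because antisymmetry at $x$ forces $[x]=\{x\}$, whence the topology trivially fails to distinguish the points of $[x]$, leaving condition (ii) as the only substantive hypothesis.

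Since both ingredients are already in hand, there is no genuine obstacle remaining: all the real work lies in the two cited results, namely the passage to weak convexity (Theorem~\ref{bwx} feeding Theorem~\ref{loi}) and the $k_\omega$ promotion (Lemma~\ref{pry} feeding Remark~\ref{vux}). The only point requiring a moment's care is to verify that the hypotheses labelled (i) and (ii) in the present statement are \emph{literally} the pointwise hypotheses demanded by each cited result, so that no extra assumption---in particular no global local compactness and no global antisymmetry---is smuggled in; this is immediate from the fact that both Theorem~\ref{loi} and Remark~\ref{vux} are stated pointwise at $x$.
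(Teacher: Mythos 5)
Your proof is correct and is essentially identical to the paper's own argument: the paper's proof reads, in its entirety, ``By theorem \ref{loi} weak convexity holds at $x$. By remark \ref{vux} convexity holds at $x$.'' Your additional verifications (that the hypotheses of Theorem~\ref{loi} and Remark~\ref{vux} are pointwise and match literally, and that antisymmetry at $x$ gives $[x]=\{x\}$) are accurate glosses on the same two-step assembly.
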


\begin{proof}
By theorem \ref{loi} weak convexity holds at $x$. By remark
\ref{vux} convexity holds at $x$.
\end{proof}

\begin{corollary} \label{bdr}
Every locally compact $\sigma$-compact $k$-$T_2$-ordered space  is
convex (and since they are normally ordered they are quasi\hyph
uniformizable).
\end{corollary}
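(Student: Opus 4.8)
The plan is to recognize this corollary as the ordered specialization of the immediately preceding theorem and then to invoke the cited characterization of quasi-uniformizability. First I would observe, as recalled in the introduction, that a locally compact $\sigma$-compact space is a $k_\omega$-space, so the hypotheses of the preceding theorem concerning the underlying topology are met. The task then reduces to checking conditions (i) and (ii) of that theorem at every point $x$.

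Here antisymmetry does the work. Since the space is ordered, $[x]=d(x)\cap i(x)=\{x\}$ at every point, so there are no distinct points of $[x]$ for the topology to distinguish, and condition (i) holds trivially and everywhere. Condition (ii), local compactness at $x$, holds at every point because the space is globally locally compact. The preceding theorem therefore gives convexity at each $x$, which is precisely convexity of the space. (Equivalently, one can trace through Theorem \ref{bwx}: at each $x$ contained in the interior of a compact neighborhood, that theorem produces either an open convex neighborhood of $[x]=\{x\}$ inside the neighborhood, or a point $p\notin\mathrm{Int}\,K$ with $x\le p\le x$; antisymmetry forbids the latter since it would force $p=x\in\mathrm{Int}\,K$, so the convex alternative always holds.)

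For the parenthetical claim I would recall two facts already stated in the excerpt. Every $T_2$-preordered $k_\omega$-space is normally preordered \cite{minguzzi11f}, so our space is normally preordered; and every convex normally preordered space is quasi\hyph uniformizable, i.e.\ completely regularly preordered \cite[Prop.\ 4.7]{fletcher82}. Combining the convexity just established with normal preorderedness yields quasi-uniformizability.

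There is no genuine obstacle at this stage: all the substantive work—the compactness and limiting argument of Proposition \ref{jhg} and Theorem \ref{bwx}, which either produces a local convex neighborhood or violates antisymmetry—has already been completed. The only point requiring care is the verification that antisymmetry is exactly what renders condition (i) automatic and simultaneously excludes the failure branches of Theorem \ref{bwx}, so that the corollary follows by specialization rather than by any new construction.
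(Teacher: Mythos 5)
Your main argument is correct and is essentially the paper's own proof: the paper disposes of this corollary in one line (under local compactness, $\sigma$-compactness is equivalent to the $k_\omega$-property), leaving implicit exactly the two verifications you spell out, namely that antisymmetry makes $[x]=\{x\}$ so condition (i) of the preceding theorem is vacuous, and that global local compactness gives (ii); the parenthetical claim follows, as you say, from preorder normality of $T_2$-preordered $k_\omega$-spaces plus quasi-uniformizability of convex normally preordered spaces.

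One caveat on your ``equivalently'' remark: tracing through Theorem \ref{bwx} alone is \emph{not} equivalent to invoking the preceding theorem. Theorem \ref{bwx} (via Theorem \ref{loi}) yields only \emph{weak} convexity at $x$, i.e.\ a base of open convex neighborhoods, whereas convexity in this paper's sense demands, for each open $O\ni x$, an open \emph{decreasing} set $U$ and an open \emph{increasing} set $V$ with $x\in U\cap V\subset O$. The promotion from weak convexity to convexity is the main result of Section 2 (Lemma \ref{pry} and Remark \ref{vux}) and genuinely uses the $k_\omega$ structure; it is exactly what the preceding theorem packages together with Theorem \ref{loi}. Accordingly, your closing claim that all the substantive work lies in Proposition \ref{jhg} and Theorem \ref{bwx} understates matters: without the local-to-global step of Section 2, the argument would stop at weak convexity and would not deliver the corollary as stated.
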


\begin{proof}
Under local compactness the $k_\omega$-space property and
$\sigma$-compactness are equivalent.
\end{proof}


With reference to the example of compactly generated preorder given
by Example \ref{bws} (see also Sect. \ref{nod}) we have the
following consequence.

\begin{theorem} \label{nix}
Let $(M,g)$ be a stably causal spacetime, let $\mathscr{T}$ be the
manifold topology and let $J^+_S$ be the Seifert relation,  then
$(M,\mathscr{T}, J^+_S)$ is quasi-uniformizable and hence admits the
Nachbin compactification.
\end{theorem}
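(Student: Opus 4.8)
The plan is to recognize $(M,\mathscr{T},J^+_S)$ as a locally compact $\sigma$-compact $k$-$T_2$-ordered space and then to invoke Corollary \ref{bdr} directly. There are three hypotheses of that corollary to verify: that the triple is a closed ordered space, that the topology is locally compact and $\sigma$-compact, and that the order is compactly generated in the sense of Definition \ref{jdp}.

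First I would establish that the triple is a $T_2$-ordered space. Recall from Sect.\ \ref{nod} that Seifert's relation $J^+_S$ is closed, reflexive and transitive, and that under stable causality $K^+$ is antisymmetric with $K^+=J^+_S$ \cite{minguzzi08b}. Hence $J^+_S$ is a closed order and $(M,\mathscr{T},J^+_S)$ is a closed ordered space. For the topology I would record that $M$ is, by the definition of spacetime in Sect.\ \ref{nod}, a connected paracompact manifold; such a manifold is second countable and hence $\sigma$-compact, and every manifold is locally compact.

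The key step is to show that $J^+_S$ is compactly generated. This is essentially the content of Example \ref{bws}, phrased there for $K^+$: taking a locally finite closed and compact covering $\{F_\alpha\}$ of $M$ and setting $R=\bigcup_\alpha J^+\cap (F_\alpha\times F_\alpha)$, the relation $R$ satisfies condition (i) of Definition \ref{jdp} because $\overline{R(C)}$ is compact for every compact $C$, while $J^+$ is the smallest transitive relation containing $R$, so that $K^+$, the smallest closed transitive relation containing $J^+$, is also the smallest closed preorder containing $R$. Since $K^+=J^+_S$ under stable causality, the relation $R$ is a generating relation for $J^+_S$, which is therefore a $k$-$T_2$-preorder.

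Having verified all the hypotheses, Corollary \ref{bdr} yields that $(M,\mathscr{T},J^+_S)$ is a convex normally ordered space and hence quasi-uniformizable. The existence of the Nachbin $T_2$-ordered compactification then follows because quasi-uniformizable (equivalently, completely regularly ordered) Hausdorff ordered spaces are compactifiable, as recalled in the preliminaries. The main obstacle is the identification $K^+=J^+_S$ under stable causality: this is the bridge that transfers the compact generation established for $K^+$ in Example \ref{bws} to the Seifert relation, and it rests on the nontrivial causality-theoretic result \cite{minguzzi08b}. Once that identification is in hand, the remainder is a matter of checking standard manifold topology and quoting Corollary \ref{bdr}.
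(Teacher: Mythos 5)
Your proposal is correct and follows exactly the route the paper intends: the theorem is stated there as an immediate consequence of Example \ref{bws} (compact generation of $K^+$), the identification $K^+=J^+_S$ under stable causality from \cite{minguzzi08b}, and Corollary \ref{bdr} applied to the locally compact $\sigma$-compact manifold topology. Your write-up merely makes explicit the verification steps the paper leaves implicit, so there is nothing to correct.
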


With reference to Example \ref{crg} we obtain:

\begin{theorem}
Let $E$ be the dynamical system whose flow is generated by a vector
field described in Example \ref{crg}, let $\mathscr{T}$ be the
manifold topology, and let $J$ be the reflexive relation there
defined. If $\mathcal{G}J$ is antisymmetric then
$(E,\mathscr{T},\mathcal{G}J)$ is quasi-uniformizable and hence
admits the Nachbin compactification.
\end{theorem}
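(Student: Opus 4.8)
The plan is to deduce everything from Corollary \ref{bdr}, which states that every locally compact $\sigma$-compact $k$-$T_2$-ordered space is convex, normally ordered, and hence quasi-uniformizable. The whole task therefore reduces to checking that $(E,\mathscr{T},\mathcal{G}J)$ meets the hypotheses of that corollary, once the antisymmetry of $\mathcal{G}J$ is granted.

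First I would import from Example \ref{crg} the fact that $\mathcal{G}J$ is a $k$-$T_2$-preorder. Indeed $\mathcal{G}J$ is by construction the smallest closed and transitive relation containing the reflexive relation $J$; it therefore contains the diagonal and is a closed preorder, so it is a $T_2$-preorder, and the relation $R=\bigcup_{\alpha} J\cap(F_\alpha\times F_\alpha)$ is a generating relation satisfying conditions (i)-(ii) of Definition \ref{jdp}. Thus compact generation and the $T_2$-preorder property require no new argument.

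Next I would verify the two topological hypotheses. Local compactness is immediate, since $E$ is a manifold and every point has a compact coordinate neighborhood. For $\sigma$-compactness I would invoke the standing assumptions of Example \ref{crg} that $E$ is a connected, Hausdorff, paracompact manifold, together with the standard fact that a connected, locally compact, paracompact Hausdorff space is $\sigma$-compact; equivalently, under local compactness $E$ is a $k_\omega$-space.

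Finally, adjoining the hypothesis that $\mathcal{G}J$ is antisymmetric turns the closed preorder into a closed order, so $(E,\mathscr{T},\mathcal{G}J)$ is a locally compact $\sigma$-compact $k$-$T_2$-ordered space. Corollary \ref{bdr} then gives convexity, normality of the order, and quasi-uniformizability in one stroke. Since a quasi-uniformizable space is completely regularly preordered and here the preorder is an order, the space is completely regularly ordered and hence admits the Nachbin $T_2$-ordered compactification $nE$, as recalled in the preliminaries. I do not expect a genuine obstacle: all the substance is packaged in Corollary \ref{bdr} and Example \ref{crg}, and the argument is merely a verification of hypotheses. The only point deserving a line of care is the passage from connectedness and paracompactness to $\sigma$-compactness, which is a classical manifold fact rather than a new difficulty.
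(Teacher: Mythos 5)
Your proposal is correct and follows exactly the route the paper intends: the theorem is stated as an immediate consequence of Example \ref{crg} (which exhibits $R=\bigcup_\alpha J\cap(F_\alpha\times F_\alpha)$ as a generating relation making $\mathcal{G}J$ a $k$-$T_2$-preorder) combined with Corollary \ref{bdr}, the manifold hypotheses supplying local compactness and $\sigma$-compactness. Your extra care about connected paracompact locally compact Hausdorff $\Rightarrow$ $\sigma$-compact is a valid filling-in of a step the paper leaves implicit.
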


\section{Quasi-pseudo-metrizability}

A {\em quasi-pseudo-metric} \cite{kelly63,patty67}  on a set $X$ is
a function $p:X\times X \to [0,+\infty)$ such that for $x,y,z\in X$
\begin{itemize}
\item[(i)] $p(x,x)= 0$,
\item[(ii)] $p(x,z)\le p(x,y)+p(y,z)$.
\end{itemize}
The quasi-pseudo-metric is called {\em quasi-metric} \cite{wilson31}
if (i) is replaced with (i'): $p(x,y)= 0$ iff $x=y$. Other
variations exist in the literature. The {\em Albert's quasi-metric}
\cite{albert41} is a special type of quasi-pseudo-metric which is
obtained replacing (i) with (i'') $p(x,y)=p(y,x)=0$ iff $x=y$.

The quasi-pseudo-metric is called {\em pseudo-metric} if
$p(x,y)=p(y,x)$. If a quasi-metric is such that $p(x,y)=p(y,x)$,
then it is a {\em metric} in the usual sense. If $p$ is a
quasi-pseudo-metric then $p^{-1}$, defined by $p^{-1}(x,y)=p(y,x)$,
is a quasi-pseudo-metric called {\em conjugate} of $p$. Each
quasi-pseudo-metric  $p$ generates a topology whose base is given by
the $p$-balls, $B^p_\epsilon(x)=\{y: p(x,y)< \epsilon\}$.

A topological preordered space $(E,\mathscr{T},\le)$ is {\em
quasi-pseudo-metrizable} if  there is a pair of conjugate
quasi-pseudo-metrics $p,q$, called {\em admissible}, such that
$\mathscr{T}$ is the topology generated by the pseudo-metric $p+q$
(equivalently $p\vee p^{-1}$), and the graph of the preorder is
given by $G(\le)=\{(x,y): p(x,y)=0\}$.

In the literature on bitopological spaces \cite{kelly63,patty67} a
bitopological space $(X,\mathscr{P},\mathscr{Q})$ is {\em
quasi-pseudo-metrizable} if there is a quasi-pseudo-metric $p$ such
that $p$ generates $\mathscr{P}$ and $p^{-1}$ generates
$\mathscr{Q}$.

A topological preordered space $(E,\mathscr{T},\le)$ is {\em
strictly quasi-pseudo-metrizable} if it is convex semiclosed
preordered and there is a pair of conjugate quasi-pseudo-metrics
$p,q$, such that the topology associated to $p$ is the upper
topology $\mathscr{T}^\sharp$, and the topology associated to $q$ is
the lower topology $\mathscr{T}^\flat$. In other words,  according
to our terminology $(E,\mathscr{T},\le)$ is strictly
quasi-pseudo-metrizable iff it is convex semiclosed preordered and
$(E,\mathscr{T}^\sharp,\mathscr{T}^\flat)$ is
quasi-pseudo-metrizable.

Every strictly quasi-pseudo-metrizable preordered space is a
quasi-pseudo-metrizable preordered space. Every
quasi-pseudo-metrizable preordered space is a completely regularly
preordered space \cite[Prop. 2.3]{minguzzi12b}.

Every $T_2$-ordered space is Hausdorff and ``every second countable
Hausdorff locally compact topological space is metrizable'' by
Urysohn's metrization theorem. The next result is a kind of order
generalization, which reduces to the just given statement for the
discrete order.

\begin{theorem} \label{ncu}
Let $(E,\mathscr{T}, \le)$ be a $T_2$-ordered space such that
$(E,\mathscr{T})$ is second countable and locally compact. If
$(E,\mathscr{T}, \le)$ is $k$-preserving or compactly generated,
then it is quasi-pseudo-metrizable. That is, there is a
quasi-pseudo-metric $p: E\times E\to [0,+\infty)$ (actually an
Albert's quasi-metric) such that $\mathscr{T}$ is the topology
induced by the metric $p\vee p^{-1}$ and
$G(\le)=\{(x,y):p(x,y)=0\}$.
\end{theorem}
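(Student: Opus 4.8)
The plan is to reduce the statement to the construction of a \emph{countable} family of continuous isotone functions that simultaneously generates the topology and detects the order, and then to manufacture the quasi-pseudo-metric as a weighted sum of these functions.

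First I would observe that the hypotheses already force convexity. A second countable locally compact space is Lindel\"of and hence $\sigma$-compact, so $(E,\mathscr{T})$ is a $k_\omega$-space and, being $T_2$-ordered, a normally ordered space \cite{minguzzi11f}. Under the $k$-preserving assumption convexity follows from Theorem \ref{bdd}, and under compact generation from Corollary \ref{bdr}. A convex normally ordered space is completely regularly ordered, so by Nachbin's characterization ((i)--(ii) above) the topology $\mathscr{T}$ coincides with the initial topology induced by the family $\mathcal{F}$ of continuous isotone functions $f\colon E\to[0,1]$, and $x\le y$ iff $f(x)\le f(y)$ for every $f\in\mathcal{F}$. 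Equivalently, $x\mapsto(f(x))_{f\in\mathcal{F}}$ is a topological and order embedding of $E$ into $[0,1]^{\mathcal{F}}$ with the product order; it therefore suffices to replace $\mathcal{F}$ by a countable subfamily that still embeds $E$ into a Hilbert cube.

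The heart of the proof is this countable reduction, and it is here that second countability is essential. For the \emph{topology}: the sets $\{f>a\}$ (open increasing) and $\{f<b\}$ (open decreasing), with $f\in\mathcal{F}$ and $a,b$ rational, form a subbase of $\mathscr{T}$, so their finite intersections form a base; since $\mathscr{T}$ is second countable, this base contains a countable base, and the functions occurring in it (finitely many per base element, hence altogether countably many) form a countable subfamily $\{f_k\}$ whose initial topology is $\mathscr{T}$. For the \emph{order}: the open subspace $(E\times E)\setminus G(\le)$ is second countable, hence Lindel\"of; since it is covered by the open sets $\{(x,y):g(x)>g(y)\}$, $g\in\mathcal{F}$, countably many of them suffice, yielding a countable subfamily $\{g_l\}$ such that $x\nleq y$ implies $g_l(x)>g_l(y)$ for some $l$. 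Enumerating $\{f_k\}\cup\{g_l\}$ as $\{h_k\}_{k\ge1}\subset\mathcal{F}$ gives a countable family of continuous isotone functions which generates $\mathscr{T}$ and satisfies $x\le y\iff h_k(x)\le h_k(y)$ for all $k$ (forward because each $h_k$ is isotone, converse because of the $g_l$).

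Finally I would set
\[
p(x,y)=\sum_{k\ge1}2^{-k}\max\{h_k(x)-h_k(y),\,0\}.
\]
The bound $h_k\in[0,1]$ makes the series converge with values in $[0,1]$; axiom (i) of a quasi-pseudo-metric is immediate and the triangle inequality follows termwise from $\max\{s-u,0\}\le\max\{s-t,0\}+\max\{t-u,0\}$. Since $p(x,y)=0$ iff $h_k(x)\le h_k(y)$ for all $k$, we obtain $G(\le)=\{(x,y):p(x,y)=0\}$. The conjugate is $p^{-1}(x,y)=p(y,x)$, and $(p+p^{-1})(x,y)=\sum_k2^{-k}|h_k(x)-h_k(y)|$ is the standard metric inducing the initial topology of $\{h_k\}$, namely $\mathscr{T}$; antisymmetry gives $p(x,y)=p(y,x)=0$ iff $x\le y$ and $y\le x$ iff $x=y$, so $p$ is in fact an Albert's quasi-metric. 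The main obstacle is exactly the countable reduction carried out above: one must ensure that a \emph{single} countable family of isotone functions captures both the full topology and the full order, and this is precisely where second countability (through the hereditary Lindel\"ofness of $E$ and of $E\times E$) is indispensable.
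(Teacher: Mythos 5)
Your proposal is correct, and its first half is exactly the paper's reduction: second countability plus local compactness give Lindel\"of and hence $\sigma$-compactness, after which Theorem \ref{bdd} (in the $k$-preserving case) or Corollary \ref{bdr} (in the compactly generated case) yields convexity, so the normally ordered space $E$ is completely regularly ordered, and at the very end antisymmetry upgrades $p$ to an Albert's quasi-metric. Where you genuinely diverge is the metrization step: the paper disposes of it in one line by citing \cite[Theor.\ 2.5]{minguzzi12b}, which asserts that such completely regularly ordered second countable spaces are (separable) quasi-pseudo-metrizable, whereas you prove that step from scratch. Your construction --- extracting a countable subfamily of Nachbin's continuous isotone functions that still generates the topology (via the standard fact that any base of a second countable space contains a countable base, applied to the subbase $\{f>a\}$, $\{f<b\}$) and still detects the order (via hereditary Lindel\"ofness of the second countable $E\times E$ applied to the open set $(E\times E)\setminus G(\le)$), then forming $p(x,y)=\sum_k 2^{-k}\max\{h_k(x)-h_k(y),0\}$ --- is sound in every detail: the termwise triangle inequality, the identity $G(\le)=\{p=0\}$, and the fact that $p+p^{-1}$ (equivalently $p\vee p^{-1}$) is the pullback of the standard metric of the Hilbert cube and therefore induces the initial topology of $\{h_k\}$, which coincides with $\mathscr{T}$. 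In effect you have inlined a proof of the cited external theorem by an Urysohn-type embedding argument adapted to the ordered setting; this buys self-containedness and makes transparent precisely where second countability enters, at the cost of length, while the paper's version is a quick reduction to known machinery.
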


\begin{proof}
Second countability implies the Lindel\"of property which under
local compactness is equivalent to $\sigma$-compactness. The
topological ordered space is a completely regularly ordered space
(quasi-uniformizable) by theorem \ref{bdd} (in the $k$-preserving
case) or theorem \ref{bdr}. Thus $E$ is a separable
quasi-pseudo-metric space by \cite[Theor.\ 2.5]{minguzzi12b}. The
pseudo-metric $p\vee p^{-1}$ is actually a metric by antisymmetry of
$\le$, thus $p$ is an Albert's quasi metric \cite{albert41}.
\end{proof}

We remark that we are not claiming that the topology induced by $p$
is the upper topology $\mathscr{T}^\sharp$ and that induced by
$p^{-1}$ is the lower topology $\mathscr{T}^\flat$ (which would be
true if we could prove strict quasi-pseudo-metrizability
\cite{minguzzi12b}).

\subsection{Strict quasi-pseudo-metrization from the $I$-space condition}

In order to prove the strict quasi-pseudo-metrizability of a
topological preordered space we assume the $I$-space condition.

Let us recall that a topological preordered space  is a {\em
regularly preordered space} if it is semiclosed preordered, (a) for
every closed decreasing set $A$ and closed increasing set $B$ of the
form $B=i(x)$ which are disjoint, $A\cap B=\emptyset$, it is
possible to find an open decreasing set $U$ and an open increasing
set $V$ which separate them, namely $A\subset U$, $B\subset V$, and
$U\cap V=\emptyset$, and (b) for every closed decreasing set $A$ of
the form $A=d(x)$ and closed increasing set $B$  which are disjoint,
$A\cap B=\emptyset$, it is possible to find an open decreasing set
$U$ and an open increasing set $V$ which separate them, namely
$A\subset U$, $B\subset V$, and $U\cap V=\emptyset$. A completely
regularly preordered space need not be regularly preordered
\cite[Example 1]{kunzi94b}. This is a crucial difference with
respect to the usual discrete-preorder version.

The problem of quasi-pseudo-metrization of a bitopological space was
considered  in Kelly's work \cite{kelly63} and has been extensively
studied over the years
\cite{salbany72,parrek80,romaguera83,romaguera84,raghavan88,romaguera90,andrikopoulos07,marin09}.
For bitopological spaces Kelly \cite[Theor.\ 2.8]{kelly63} obtained a
generalization of Urysohn's metrization theorem which in our
topological preordered space framework reads as follows

\begin{theorem} \label{nvv} (Kelly)
Let $(E,\mathscr{T},\le)$ be a convex regularly preordered space and
assume that both $\mathscr{T}^\sharp$ and $\mathscr{T}^\flat$ are
second countable, then $(E,\mathscr{T},\le)$ is strictly
quasi-pseudo-metrizable.
\end{theorem}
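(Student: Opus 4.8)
The plan is to read the statement as the preordered-space translation of Kelly's bitopological metrization theorem \cite{kelly63} and to reproduce, in Nachbin's language, a Urysohn-type construction. By convexity $\mathscr{T}=\mathscr{T}^{\sharp}\vee\mathscr{T}^{\flat}$, so all the information is carried by the bitopological space $(E,\mathscr{T}^{\sharp},\mathscr{T}^{\flat})$, and the goal is to exhibit a conjugate pair $p,\,q=p^{-1}$ of quasi-pseudo-metrics with $p$ generating the upper (increasing) topology $\mathscr{T}^{\sharp}$, $q$ generating the lower (decreasing) topology $\mathscr{T}^{\flat}$, and $G(\le)=\{(x,y):p(x,y)=0\}$. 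The natural candidate is a weighted sum over a countable family $\{f_k\}$ of continuous isotone functions $f_k:E\to[0,1]$, namely
\[
p(x,y)=\sum_{k} 2^{-k}\,\max\{f_k(x)-f_k(y),0\}.
\]
Each summand obeys $\max\{a-c,0\}\le \max\{a-b,0\}+\max\{b-c,0\}$, so $p$ is automatically a quasi-pseudo-metric with $p(x,x)=0$, and since every $f_k$ is isotone one gets $x\le y\Rightarrow p(x,y)=0$ at once. The whole problem thus reduces to choosing the family $\{f_k\}$ wisely.

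First I would upgrade the hypotheses to preorder normality. Since both $\mathscr{T}^{\sharp}$ and $\mathscr{T}^{\flat}$ are second countable, $\mathscr{T}$ is second countable and hence Lindel\"of; a regularly preordered Lindel\"of space should then be normally preordered, by the preorder analogue of the classical fact that a regular Lindel\"of space is normal. Concretely, given disjoint closed decreasing $A$ and closed increasing $B$, one covers $A$ by countably many basic monotone open sets whose (opposite) closures miss $B$, does the same for $B$, and performs the usual countable shrinking to manufacture the separating open decreasing and open increasing sets. Normality then furnishes the preorder analogue of Urysohn's separation lemma \cite{nachbin65}, which is the engine of the construction.

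Next, using countable bases $\{U_n\}$ of $\mathscr{T}^{\sharp}$ (increasing $\mathscr{T}$-open sets) and $\{V_m\}$ of $\mathscr{T}^{\flat}$ (decreasing $\mathscr{T}$-open sets), I would extract the family $\{f_k\}$: for each pair of basic sets standing in the appropriate bitopological closure--inclusion relation, the Urysohn lemma yields a continuous isotone function taking the values $0$ and $1$ on the separated closed monotone sets, and collecting these over all admissible pairs gives a countable family. The property to arrange is a localization: whenever $x\in U_n$ some $f_k$ has $f_k(x)=1$ and $f_k\equiv 0$ off $U_n$, and symmetrically relative to the $V_m$ through the same functions. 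Because each $f_k$ is continuous and isotone, the level set $\{f_k>c\}$ is increasing and $\mathscr{T}$-open, hence $\mathscr{T}^{\sharp}$-open, while $\{f_k<c\}$ is decreasing and $\mathscr{T}^{\flat}$-open; as a consequence $y\mapsto p(x,y)$ is $\mathscr{T}^{\sharp}$-upper semicontinuous, so every ball $B^{p}_{\epsilon}(x)$ is $\mathscr{T}^{\sharp}$-open (and dually $p^{-1}$-balls are $\mathscr{T}^{\flat}$-open), giving the inclusions of the generated topologies in the right directions.

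It then remains to verify that $p$ generates \emph{exactly} $\mathscr{T}^{\sharp}$ and that $p(x,y)=0$ forces $x\le y$. For the latter, if $x\not\le y$ then $i(x)\cap d(y)=\emptyset$ by transitivity, so regularity separates them by an increasing open neighbourhood of $x$ missing $y$; the localization property then supplies an $f_k$ with $f_k(x)>f_k(y)$, whence $p(x,y)>0$. For the former, the localization property shows that every basic $\mathscr{T}^{\sharp}$-neighbourhood of $x$ swallows some ball $B^{p}_{\epsilon}(x)$, so the balls form a base. I expect the main obstacle to be precisely this base step: one must choose the countable family carefully enough that its level sets simultaneously recover the given second countable bases of $\mathscr{T}^{\sharp}$ and of $\mathscr{T}^{\flat}$ (the latter via $p^{-1}$ through the very same functions), which is the bitopological refinement of the embedding argument in Urysohn's theorem and the place where the interaction of the two monotone topologies must be controlled. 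Deriving preorder normality from regularity together with second countability is the other delicate point, since the decreasing and increasing sides have to be handled in tandem.
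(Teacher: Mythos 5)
The paper does not actually prove this statement---it is Kelly's bitopological metrization theorem, cited from \cite[Theor.\ 2.8]{kelly63} and merely restated in Nachbin's preorder language---and your reconstruction follows exactly Kelly's route: preorder regularity plus the Lindel\"of property (from second countability) yields preorder normality via the monotone shrinking argument (which works because removing a closed increasing hull from an open decreasing set leaves it open decreasing), the preorder Urysohn lemma applied to pairs of basic sets $U_n\subset U_m$ with $I(U_n)\subset U_m$ (and dually $D(V_n)\subset V_m$ for the lower base) supplies the countable family of continuous isotone functions, and the weighted sum $p(x,y)=\sum_k 2^{-k}\max\{f_k(x)-f_k(y),0\}$ is the quasi-pseudo-metric. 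Your outline is correct, and the two points you flag as delicate do close precisely as you indicate---the localization follows from regularity applied to $i(x)$ against the complement of a basic open increasing set (resp.\ $d(x)$ against the complement of a basic open decreasing set)---so this is essentially the same proof as the one the paper relies on by citation.
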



%
%
%
%

Under the $I$-space assumption it is possible to infer the second
countability of the coarser topologies $\mathscr{T}^\sharp$ and
$\mathscr{T}^\flat$ given that of $\mathscr{T}$, and hence we are
able to prove the next result.

\begin{theorem} \label{nci}
Every second countable locally convex locally compact
$T_2$-preordered $I$-space $(E,\mathscr{T},\le)$ is strictly
quasi-pseudo-metrizable (observe that local convexity holds whenever
the space is $k$-preserving or compactly generated, and the preorder
is antisymmetric, see Theor. \ref{bdd} and Cor. \ref{bdr}).
\end{theorem}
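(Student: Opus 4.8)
The plan is to deduce the result from Kelly's metrization theorem (Theorem \ref{nvv}), whose hypotheses are precisely convexity, regular preorderedness, and the second countability of both coarser topologies $\mathscr{T}^\sharp$ and $\mathscr{T}^\flat$. The proof therefore reduces to verifying these three conditions from the data at hand: second countability and local compactness of $\mathscr{T}$, local convexity, the $T_2$-preorder property, and the $I$-space condition.

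First I would observe that second countability implies the Lindel\"of property, which under local compactness is equivalent to $\sigma$-compactness. Hence $(E,\mathscr{T},\le)$ is a locally convex $T_2$-preordered locally compact $\sigma$-compact space, so Corollary \ref{bak} applies and yields that the space is convex and normally preordered. Convexity is the first hypothesis of Kelly's theorem. For the second, I would note that every normally preordered space is in particular regularly preordered: the separation requirements (a) and (b) in the definition of regular preorderedness are special instances of normal separation, obtained by taking the closed increasing set to be $i(x)$ or the closed decreasing set to be $d(x)$, which are indeed closed monotone sets since the space is semiclosed preordered.

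The remaining and crucial step is to establish second countability of $\mathscr{T}^\sharp$ and $\mathscr{T}^\flat$, and this is exactly where the $I$-space condition enters. Let $\mathcal{B}$ be a countable base for $\mathscr{T}$. For each $B\in\mathcal{B}$ the set $i(B)$ is increasing by construction and open by the $I$-space property, hence $i(B)\in\mathscr{T}^\sharp$. I claim the countable family $\{i(B):B\in\mathcal{B}\}$ is a base for $\mathscr{T}^\sharp$: given an open increasing set $V$ and a point $x\in V$, choose $B\in\mathcal{B}$ with $x\in B\subset V$; then $x\in i(B)\subset i(V)=V$, the last equality holding because $V$ is increasing. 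Symmetrically, $\{d(B):B\in\mathcal{B}\}$ is a countable base for $\mathscr{T}^\flat$. With all three hypotheses verified, Kelly's Theorem \ref{nvv} gives strict quasi-pseudo-metrizability.

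I expect the main obstacle to be conceptual rather than computational: recognizing that the $I$-space condition is precisely what transports second countability from $\mathscr{T}$ to the coarser topologies $\mathscr{T}^\sharp$ and $\mathscr{T}^\flat$, since without it the hulls $i(B)$ and $d(B)$ of base elements need not be open and the natural candidate bases collapse. Once the bases $\{i(B)\}$ and $\{d(B)\}$ are identified the verification is routine, and everything else is an assembly of the convexity and normality supplied by Corollary \ref{bak} together with Kelly's theorem.
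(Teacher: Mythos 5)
Your proposal is correct, and it has the same skeleton as the paper's proof: verify the three hypotheses of Kelly's Theorem~\ref{nvv}, with the $I$-space property used exactly as you use it to transport second countability of $\mathscr{T}$ to $\mathscr{T}^\sharp$ and $\mathscr{T}^\flat$ (the paper's base $\{i(O_i)\}$, $\{d(O_i)\}$ is literally your base). Where you diverge is in how convexity and regularity are obtained. For convexity the paper does not invoke Corollary~\ref{bak} at all: it uses the much cheaper Theorem~\ref{bfr} (every locally convex $I$-space is convex), which needs none of the $k_\omega$-machinery. For regularity the paper also avoids normality: it argues directly that for $x\notin B$ ($B$ closed increasing), strong local compactness at $x$ (Prop.~\ref{pbf}) gives $x\in O\subset K\subset E\setminus B$ with $O$ open and $K$ compact, so $d(O)$ is open decreasing (by the $I$-space property) and sits inside the closed decreasing set $d(K)$ (Prop.~\ref{pr1}) disjoint from $B$, yielding the required separation. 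Your route instead pulls both convexity and normality from Corollary~\ref{bak} (via second countable $\Rightarrow$ Lindel\"of $\Rightarrow$ $\sigma$-compact under local compactness) and then observes that normally preordered implies regularly preordered; that implication is indeed valid, since in a semiclosed preordered space $i(x)$ and $d(x)$ are closed monotone sets, so conditions (a) and (b) of regularity are special instances of preorder normality. The trade-off: your argument is shorter and shows the paper's hands-on regularity step is dispensable once Corollary~\ref{bak} is available, while the paper's version is lighter in its dependencies, resting only on the elementary Theorem~\ref{bfr} and Propositions~\ref{pbf}, \ref{pr1} rather than on the full convexity-promotion machinery behind Corollary~\ref{bak}.
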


\begin{proof}
By theorem \ref{bfr} $E$ is convex. Let us prove that $E$ is a
regularly preordered space. Let $B$ be a closed increasing set and
let $x\in E\backslash B$. By Prop.\ \ref{pbf} strong local
compactness holds at $x$, thus there are an open set $O$ and a
compact set $K$, such that $x\in O\subset K\subset E\backslash B$.
The open decreasing set $d(O)$ is contained in the closed decreasing
set $d(K)$ which is disjoint from $B$. The proof in the dual case is
analogous, thus $E$ is regularly preordered. Let $\{O_i\}$ be a
countable base for $\mathscr{T}$, then $\{i(O_i)\}$ is a countable
base for $\mathscr{T}^\sharp$ and $\{d(O_i)\}$ is a countable base
for $\mathscr{T}^\flat$. By theorem \ref{nvv} $(E,\mathscr{T},\le)$
is strictly quasi-pseudo-metrizable .
\end{proof}

A  relevant application of this theorem is (see Sect.\ \ref{nod} for
definitions and basic results in causality theory)
\begin{theorem}
Globally hyperbolic, causally simple, and causally continuous
spacetimes endowed with the manifold topology $\mathscr{T}$ and the  order
$\overline{J^+}$ are strictly quasi-pseudo-metrizable topological
ordered spaces.
\end{theorem}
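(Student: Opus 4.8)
The plan is to recognize this statement as a direct application of Theorem \ref{nci}, so that the work consists entirely in verifying its five hypotheses---second countability, local compactness, the $T_2$-preorder (closed order) property, the $I$-space property, and local convexity---for the ordered space $(M,\mathscr{T},\overline{J^+})$ in each of the three causality classes. The key preliminary observation is that all three classes collapse to one: by the implications recalled in Sect.\ \ref{nod}, global hyperbolicity implies causal simplicity, which implies causal continuity, so it suffices to treat a causally continuous spacetime $(M,g)$. Under causal continuity one has $D^+=\overline{J^+}=K^+$, this relation is antisymmetric (so the spacetime is stably causal and $\overline{J^+}$ is a genuine order), and $(M,\mathscr{T},\overline{J^+})$ is a closed ordered space; moreover the footnote of Remark \ref{mix} shows precisely that this space is an $I$-space. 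Thus the closed-order and $I$-space hypotheses are handed to us directly by Remark \ref{mix}.

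Next I would dispatch the purely topological hypotheses. Since $M$ is a connected paracompact manifold it is locally Euclidean, hence locally compact, and connectedness together with paracompactness yields second countability; this takes care of two further hypotheses without any reference to the order. It then remains to verify local convexity, which I would obtain through the compactly generated route rather than by any direct estimate. By Example \ref{bws}, the relation $K^+=\overline{J^+}$ is a $k$-$T_2$-preorder, a generating relation being $R=\bigcup_\alpha J^+\cap(F_\alpha\times F_\alpha)$ for a locally finite closed compact cover $\{F_\alpha\}$ of $M$. Because $\overline{J^+}$ is antisymmetric and $M$ is locally compact, $(M,\mathscr{T},\overline{J^+})$ is a $k$-$T_2$-ordered locally compact space, so Corollary \ref{nja} gives that it is weakly convex, and weak convexity implies local convexity.

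With all five hypotheses of Theorem \ref{nci} in place, that theorem yields the strict quasi-pseudo-metrizability of $(M,\mathscr{T},\overline{J^+})$, completing the proof. I expect no genuinely hard analytic step here: the argument is an assembly of previously established facts, and the only point requiring care is bookkeeping---confirming that causal continuity alone (the weakest of the three assumed properties) simultaneously delivers the closedness and antisymmetry of $\overline{J^+}$, the $I$-space property, and, via compact generation, the local convexity needed to invoke Theorem \ref{nci}. The most delicate of these is the $I$-space property, but it is already isolated and proved in the footnote to Remark \ref{mix}, so here it can simply be cited.
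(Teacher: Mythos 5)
Your proposal is correct and follows essentially the same route as the paper: reduce all three classes to causal continuity, obtain the closed antisymmetric order and the $I$-space property from Remark \ref{mix}, obtain local convexity from compact generation (Example \ref{bws} together with Corollary \ref{nja}, which is exactly the route indicated in the parenthetical of Theorem \ref{nci}), and apply Theorem \ref{nci}. Your write-up merely makes explicit the topological hypotheses (local compactness and second countability of a connected paracompact manifold) that the paper leaves implicit.
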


\begin{proof}
Under causal continuity $K^+=\overline{J^+}$ and $(M,\mathscr{T},
K^+)$ is compactly generated (Sect.\ \ref{jej}). Under causal
continuity the relation $\overline{J^+}$ sends open sets to open
sets (Remark \ref{mix}). Global hyperbolicity implies causal
simplicity which implies causal continuity (Sect.\ \ref{nod}).
\end{proof}
%
In other words, the strongest causality property met in causality
theory (i.e.\ global hyperbolicity) implies the strongest
preorder-separability condition.

\section{Conclusions}

In many applications the underlying mathematical structure involves
a topological  space $(E,\mathscr{T})$ endowed with a preorder
$\le$. If the preorder is not closed, it is usually convenient to
consider the smallest closed preorder containing it, and hence to
work in the framework of closed preordered spaces.

Quasi\hyph uniformizable topological preordered spaces are among the
most well behaved topological preordered spaces. They admit
completions and compactifications
\cite{nachbin65,fletcher82,liu97b,minguzzi11b}, and under second countability
they can be shown to be quasi-pseudo-metrizable \cite{minguzzi12b}.

In a previous work we established that every $T_2$-preordered
locally compact $\sigma$-compact space is normally preordered, and
hence that it is possible to obtain strong preorder-separability
properties imposing some topological conditions on $E$.
Unfortunately, normally preordered spaces are not necessarily
quasi-uniformizable, a fact that distinguishes the theory of
topological preordered spaces from the usual (discrete-preorder)
topology. In order to obtain the quasi\hyph uniformizability of the
topological preordered space it is necessary to prove its convexity.

This property is trivially satisfied in the discrete preorder case
and, as a consequence, results on the convexity of a topological
preordered space are particularly interesting as they have no analog
in the usual non-ordered topology.

We have proved that locally compact $\sigma$-compact locally convex
$T_2$-preordered spaces are convex, that is, imposing good
topological conditions on $E$  promotes local convexity to
convexity. This result is non-trivial because convexity is a global
property as it makes reference to  the openness of some monotone
sets over $E$.

Then we investigated conditions that guarantee local convexity under
antisymmetry. We proved that if the ordered space is such that the
convex hull of a compact set is compact ($k$-preserving) then
convexity holds. We also considered compactly generated preorders
proving that this condition together with the above topological
assumption on $E$, implies convexity.

In most applications the preorder is compactly generated (Sect.
\ref{jej}), thus we have indeed succeeded in proving the quasi\hyph
uniformizability of the corresponding topological preordered space,
and hence the possibility of compactifying it. For instance, a
spacetime is stably causal if and only if the relation $K^+$ of
example \ref{bws} is antisymmetric, in which case it coincides with
the Seifert's causal relation \cite{minguzzi08b,minguzzi09c}. From
our results a stably causal spacetime endowed with this relation is
quasi-uniformizable, Theor.\ \ref{nix} (and in fact
quasi-pseudo-metrizable). The Nachbin compactification allows us to
introduce a spacetime boundary and to extend the Seifert relation as
a closed relation on the whole compactified space.

The paper ends with some results on (strict)
quasi-pseudo-metrizability of second countable and locally compact
closed preordered spaces.

\section*{Acknowledgments}
I thank  H.-P.A. K{\"u}nzi for pointing out example \cite[Example
4.9]{fletcher82} and A. Fathi for pointing out Akin and Auslander's
works. This work has been partially supported by GNFM of INDAM.



\end{document}